\documentclass[10pt]{amsart}
\usepackage{amsmath,amssymb}

\vfuzz2pt 
\hfuzz2pt 

\newtheorem{theorem}{Theorem}[section]

\newtheorem{corollary}[theorem]{Corollary}
\newtheorem{lemma}[theorem]{Lemma}
\newtheorem{remark}[theorem]{Remark}
\newtheorem{proposition}[theorem]{Proposition}

\newtheorem{problem}[theorem]{Problem}

\numberwithin{equation}{section}

\newcommand{\R}{\mathbb{R}}
\newcommand{\N}{\mathbb{N}}
\newcommand{\Z}{\mathbb{Z}}
\newcommand{\C}{\mathbb{C}}
\newcommand{\dis}{\displaystyle}


\begin{document}

\title[The Gelfond-Schnirelman method in prime number theory]
{The Gelfond-Schnirelman method in prime number theory}%
\author{Igor E. Pritsker}%

\address{Department of Mathematics, 401 Mathematical Sciences, Oklahoma State
University, Stillwater, OK 74078-1058, U.S.A.}%
\email{igor@math.okstate.edu}

\thanks{This material is based upon work supported by the National Science
Foundation under Grant No. 9996410, and by the National Security Agency under
Grant No. MDA904-03-1-0081.}%
\subjclass[2000]{Primary 11N05, 31A15; Secondary 11C08}%
\keywords{Distribution of prime numbers, polynomials, integer
coefficients, weighted transfinite diameter, weighted capacity, potentials.}%



\begin{abstract}

The original Gelfond-Schnirelman method, proposed in 1936, uses
polynomials with integer coefficients and small norms on $[0,1]$
to give a Chebyshev-type lower bound in prime number theory. We
study a generalization of this method for polynomials in many
variables. Our main result is a lower bound for the integral of
Chebyshev's $\psi$-function, expressed in terms of the weighted
capacity. This extends previous work of Nair and Chudnovsky, and
connects the subject to the potential theory with external fields
generated by polynomial-type weights. We also solve the
corresponding potential theoretic problem, by finding the extremal
measure and its support.

\end{abstract}

\maketitle


\section{Lower bounds for arithmetic functions}

Let $\pi(x)$ be the number of primes not exceeding $x$. The
celebrated Prime Number Theorem (PNT), suggested by Legendre and
Gauss, states that
\begin{equation} \label{1,1}
\pi(x)\sim \frac{x}{\log{x}}\quad \mbox{as } x\to \infty.
\end{equation}
We include a very brief sketch of its history, referring for
details to many excellent books and surveys available on this
subject (see, e.g., \cite{Ing}, \cite{Dav}, \cite{TF} and
\cite{Dia}). Chebyshev \cite{Che} made the first important step
towards the PNT in 1852, by proving the bounds
\begin{equation} \label{1,2}
0.921\frac{x}{\log{x}}\le\pi(x)\le1.106\frac{x}{\log{x}}\quad
\mbox{as } x\to \infty.
\end{equation}
The famous Riemann's paper \cite{Rie}, published in 1859, gave a
strong impulse to the study of complex analytic methods related to
the zeta function. Thus Hadamard and de la Vall\'ee Poussin
independently proved the Prime Number Theorem in 1896, via
establishing that $\zeta(s)$ does not have zeros on the line
$\{1+it,\ t\in\R\}$. But the ``elementary" approaches to the PNT,
which do not use complex analysis and the zeta function, still
remained attractive. Selberg \cite{Sel} and Erd\H{o}s \cite{Erd}
found the first elementary proof of the Prime Number Theorem in
1949. A survey of elementary methods, with detailed history, may
be found in Diamond \cite{Dia}. The subject of this paper is the
elementary method of Gelfond and Schnirelman (see Gelfond's
comments in \cite[pp. 285--288]{Che}), proposed in 1936. Consider
the Chebyshev function
\begin{equation} \label{1.3}
\psi(x):=\sum_{p^m\le x} \log p,
\end{equation}
where the summation extends over the primes $p$. Note that
$\psi(x)=\log{\textup{lcm}(1,\ldots,x)}$ for $x\in\N.$ It is well
known that the PNT is equivalent to
\begin{equation} \label{1.4}
\psi(x) \sim x \quad \mbox{as}\ x\to +\infty
\end{equation}
(see \cite{Ing}, \cite{Dav}, \cite[Ch. 10]{Mon} and \cite{Dia}).
The idea of Gelfond and Schnirelman was based on a clever use of
polynomials with integer coefficients $p_n(x)=\dis\sum_{k=0}^n a_k
x^k$ and their integrals
\[ \int_0^1 p_n(x)\,dx=\sum_{k=0}^n\frac{a_k}{k+1}.\]
Observe that multiplying the above integral by the least common
multiple $\textup{lcm}(1,\ldots,n+1)$ gives an integer, so that
\begin{equation} \label{1.5}
\textup{lcm}(1,\ldots,n+1)\left|\int_0^1 p_n(x)\,dx\right| \ge 1,
\end{equation} provided $\int_0^1 p_n(x)\,dx \neq 0.$ Taking the
log of \eqref{1.5}, we have
\[ \psi(n+1) \ge -\log\left|\int_0^1 p_n(x)\,dx\right| \ge
-\log\max_{x\in[0,1]}|p_n(x)|.\] Hence
\begin{equation} \label{1.6}
\liminf_{n\to\infty}\frac{\psi(n+1)}{n}\ge-\log\limsup_{n\to\infty}
\left(\max_{x\in[0,1]}|p_n(x)|\right)^{1/n}.
\end{equation}
If one could find a sequence of polynomials $p_n$ with
sufficiently small sup norms $\|p_n\|_{[0,1]},$ so that
\begin{equation} \label{1.7}
\lim_{n \rightarrow \infty} \| p_n \|_{[0,1]}^{1/n}
\stackrel{?}{=} 1/e,
\end{equation}
then the PNT followed from \eqref{1.6}. A nice account on the
original Gelfond-Schnirelman attempt is contained in Montgomery
\cite[Ch. 10]{Mon} (also see Chudnovsky \cite{Chu}). We are led by
this method to the so-called integer Chebyshev problem on
polynomials with integer coefficients minimizing the sup norm
(see, e.g., Borwein \cite{Bor}). Let $\Z_n[x]$ be the set of
polynomials over integers, of degree at most $n$. In view of
\eqref{1.6}-\eqref{1.7}, we are interested in the integer
Chebyshev constant
\begin{equation} \label{1.8}
t_{\Z}([0,1]) := \lim_{n \rightarrow \infty} \left(\inf_{0
\not\equiv p_n \in\Z_n[x]} \| p_n \|_{[0,1]}\right)^{1/n}.
\end{equation}
It was found by Gorshkov \cite{Gor} in 1956 that \eqref{1.7} can
never be achieved. In fact, $0.4213 < t_{\Z}([0,1]) < 0.4232$ (see
\cite{Pri} for a survey of recent results on this problem). Thus
the Gelfond-Schnirelman method failed in its original form, but
one can generalize it for polynomials in many variables. Such an
idea apparently had first appeared in Trigub \cite{Tri}, and was
independently implemented by Nair \cite{Nai} and Chudnovsky
\cite{Chu}. The basis of their argument lies in another equivalent
form of the Prime Number Theorem \cite{Ing}:
\begin{equation} \label{1.9}
\int_1^x \psi(t)\ dt \sim \frac{x^2}{2} \quad \mbox{as}\ x\to
+\infty.
\end{equation}
Both Nair and Chudnovsky used the following weighted version of
Vandermonde determinant
\begin{align} \label{1.10}
V^w_n(x_1,\ldots,x_n) &:= \prod_{1\le i<j \le n} (x_i-x_j)
w(x_i)w(x_j) \\ \nonumber &= \prod_{i=1}^n w^{n-1}(x_i)
\prod_{1\le i<j \le n} (x_i-x_j),
\end{align}
where $x_i\in [0,1]$ and $w(x)=(x(1-x))^{\alpha_1},\ \alpha_1>0$,
to generate multivariate polynomials with small sup norms on the
cube $[0,1]^n.$ They obtained the numerical bound
\begin{equation} \label{1.11}
\int_1^x \psi(t)\ dt \ge 0.99035\, \frac{x^2}{2} \quad \mbox{as}\
x\to +\infty,
\end{equation}
produced by the optimal choice $\alpha_1\approx 0.195$ (our
notations differ from those of \cite{Nai} and \cite{Chu}).
Chudnovsky \cite{Chu} also indicated how this approach can be
generalized for the weights of the form
\begin{equation} \label{1.12}
w(x)=\prod_{i=1}^k |Q_{m_i}(x)|^{\alpha_i},
\end{equation}
where $Q_{m_i} \in \Z_{m_i}[x]$ and $\alpha_i>0,\ i=1,\ldots,k$.
We develop the ideas of \cite{Nai} and \cite{Chu}, and establish a
connection with the weighted potential theory (or potential theory
with external fields) that originated in the work of Gauss
\cite{Gau} and Frostman \cite{Fro} (see \cite{ST} for a modern
account on this theory). An important part of the method is the
analysis of the asymptotic behavior for the supremum norms of the
weighted Vandermonde determinants \eqref{1.10}, which is governed
by the weighted capacity $c_w$ of $[0,1]$ corresponding to the
weight $w$ (cf. Section 2 below and \cite{ST}). This method leads
to the following lower bound for the integral of $\psi$-function
via $c_w.$

\begin{theorem} \label{thm1.1}
Let $w(x)$ be as in \eqref{1.12} and let $\alpha:=\sum_{i=1}^k
\alpha_im_i.$ Then
\begin{equation} \label{1.13}
\int_1^x \psi(t)\ dt \ge \frac{-2 \log{c_w}}{4\alpha+3} \,
\frac{x^2}{2} + O(x\log^2{x}) \quad \mbox{as}\ x\to +\infty.
\end{equation}
\end{theorem}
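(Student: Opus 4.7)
The plan is to apply the Gelfond--Schnirelman method in the multivariate form of Nair \cite{Nai} and Chudnovsky \cite{Chu}, using the weighted Vandermonde $V_n^w$ of \eqref{1.10}. By continuity of the weighted capacity $c_w$ under perturbations of the exponents $\alpha_i$, one reduces to $\alpha_i\in\Q$ (the approximation error being absorbed into $O(x\log^2 x)$) and restricts to the subsequence of $n$ for which $(n-1)\alpha_i\in\Z$ for every $i$. Then $w(x)^{2(n-1)}=\prod_i Q_{m_i}(x)^{2(n-1)\alpha_i}\in\Z[x]$, and
$$P_n(x_1,\ldots,x_n) \,:=\, \bigl(V_n^w(x)\bigr)^2 \,=\, V_n(x)^2 \prod_{i=1}^n w(x_i)^{2(n-1)} \,\in\, \Z[x_1,\ldots,x_n]$$
is a nonnegative integer polynomial of degree at most $D_n:=2(n-1)(1+\alpha)$ in each variable.

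For $I_n := \int_{[0,1]^n} P_n\,dx$, I would combine two estimates. Nonnegativity gives $I_n \le \|V_n^w\|_{[0,1]^n}^{2}$, and the weighted-transfinite-diameter/capacity asymptotics $\|V_n^w\|_{[0,1]^n}^{2/n(n-1)}\to c_w$, developed in Section~2 after \cite{ST}, yields $\log I_n \le n(n-1)\log c_w + o(n^2)$. On the arithmetic side, I would use the Hankel representation
$$I_n = n!\,\det\bigl(M_{ij}\bigr)_{i,j=1}^n, \qquad M_{ij} = \int_0^1 x^{i+j-2}\,w(x)^{2(n-1)}\,dx,$$
in which each $M_{ij}$ is a rational with denominator dividing $\mathrm{lcm}(1,\ldots,i+j-1+2(n-1)\alpha)$. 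Expanding $\det M$ by Leibniz and carefully tracking the denominators produced by each permutation, I would bound the common denominator of $I_n$ by a product of $\mathrm{lcm}$s whose logarithm, through partial summation, approximates $\int_{a_n}^{b_n}\psi(t)\,dt$ for a suitable interval $[a_n,b_n]\subset[0,D_n+1]$.

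Because $I_n$'s integrand is continuous, nonnegative and not identically zero, $I_n>0$, so multiplying by that common denominator gives a positive integer, hence $\ge 1$. This produces
$$\int_{a_n}^{b_n}\psi(t)\,dt \;\ge\; -n(n-1)\log c_w + o(n^2).$$
Extending the range of integration to $[1,x]$ via $\psi\ge 0$ with $x:=D_n+1\sim 2(n-1)(1+\alpha)$, and writing $n$ in terms of $x$, yields an inequality of the shape \eqref{1.13}. The principal obstacle is calibrating the denominator analysis to produce the sharp constant $\tfrac{1}{4\alpha+3}$: the crude multiplicand $\mathrm{lcm}(1,\ldots,D_n+1)^n$ gives only $\tfrac{1}{4(1+\alpha)}=\tfrac{1}{4\alpha+4}$, so the improvement requires exploiting the Hankel structure of $M$ (or a refined construction of $P_n$) so that the resulting lower bound is an integrated $\psi$ rather than $\psi$ evaluated at a single point. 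The $O(x\log^2 x)$ error absorbs the rational approximation of the $\alpha_i$, the restriction to subsequences of $n$, and the passage between $\sum\psi$ and $\int\psi$.
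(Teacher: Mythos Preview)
Your framework coincides with the paper's: integrate the squared weighted Vandermonde over $[0,1]^n$, bound the sup norm via the weighted capacity, and control the denominator by products of $\mathrm{lcm}$'s. The Hankel identity $I_n=n!\det(M_{ij})$ is a valid repackaging, and the ``Hankel structure'' you allude to is precisely the elementary fact that for any permutation $\sigma$ of $\{1,\ldots,n\}$ the increasing rearrangement of $(j+\sigma(j)-1)_{j=1}^n$ has $r$-th term at most $n+r-1$ (if $k$ of these values were $\ge 2n-k+1$, their sum would exceed $2\bigl(n+(n-1)+\cdots+(n-k+1)\bigr)-k$). Hence every Leibniz term has denominator dividing $\prod_{l=n+N}^{\,2n-1+N}\mathrm{lcm}(1,\ldots,l)$ with $N=2(n-1)\alpha$, and one obtains, with $x:=2n(1+\alpha)$ and $\rho:=\dfrac{2\alpha+1}{2\alpha+2}$,
\[
\int_{\rho x}^{x}\psi(t)\,dt\ \ge\ -\,\frac{\log c_w}{4(1+\alpha)^2}\,x^2\ +\ O(x\log^2 x).
\]

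The genuine gap is the passage from this short-interval estimate to $\int_1^x$. Your step ``extend the range via $\psi\ge 0$'' yields only the constant $\dfrac{1}{4(1+\alpha)^2}$, which for $\alpha>0$ is \emph{worse} than the crude $\dfrac{1}{4\alpha+4}$ you quote, and is not $\dfrac{1}{4\alpha+3}$. The missing idea is not a sharper denominator bound but an \emph{iteration}: apply the displayed inequality at $x,\rho x,\rho^2 x,\ldots$ and sum. The geometric series $\sum_{k\ge 0}\rho^{2k}=(1-\rho^2)^{-1}=\dfrac{4(1+\alpha)^2}{4\alpha+3}$ then converts $\dfrac{1}{4(1+\alpha)^2}$ into the claimed $\dfrac{1}{4\alpha+3}$. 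Two smaller points: the paper avoids your rational approximation of the $\alpha_i$ by simply replacing $\alpha_i(n-1)$ with $\lceil\alpha_i(n-1)\rceil$ in the exponents, which keeps the polynomial integral for every $n$; and the error term $O(x\log^2 x)$ genuinely requires the quantitative rate of Lemma~\ref{lem2.1}, not just the limit~\eqref{2.30}, which would only give $o(x^2)$.
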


We recover the results of Nair and Chudnovsky as a special case of
Theorem \ref{thm1.1}.

\begin{corollary} \label{cor1.2}
If $w(x)=x^{\alpha_1}(1-x)^{\alpha_2},\ x\in [0,1],\
\alpha_1=\alpha_2=0.195,$ then $c_w\approx 0.1045575588$ and
\eqref{1.11} holds true.
\end{corollary}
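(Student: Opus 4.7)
The plan is to specialize Theorem \ref{thm1.1} to the symmetric Jacobi-type weight $w(x)=(x(1-x))^{0.195}$ on $[0,1]$ and then compute the weighted capacity $c_w$ numerically. The weight fits the form \eqref{1.12} with $k=2$, $Q_1(x)=x$, $Q_2(x)=1-x$, $m_1=m_2=1$, and $\alpha_1=\alpha_2=0.195$, so $\alpha=\sum_{i=1}^{2}\alpha_i m_i=0.39$ and $4\alpha+3=4.56$. Granting $c_w\approx 0.1045575588$, one computes $-\log c_w\approx 2.2580$, hence
\begin{equation*}
\frac{-2\log c_w}{4\alpha+3}\approx\frac{4.5160}{4.56}\approx 0.99036>0.99035,
\end{equation*}
and \eqref{1.11} follows from \eqref{1.13}. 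So the substantive content of the corollary reduces to the numerical determination of $c_w$.

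To obtain $c_w=\exp(-V_w)$, I would invoke the general weighted equilibrium theory of \cite{ST}: the extremal measure $\mu_w$ is the unique probability measure on $[0,1]$ minimizing
\begin{equation*}
I_w(\mu)=\iint\log\frac{1}{|s-t|}\,d\mu(s)\,d\mu(t)+2\int Q(t)\,d\mu(t),\qquad Q=-\log w,
\end{equation*}
and $V_w=I_w(\mu_w)$. Since $Q(t)=Q(1-t)$, uniqueness forces $\mu_w$ to be symmetric about $1/2$, and the change of variable $t=(1+s)/2$ sends the problem to the classical equilibrium on $[-1,1]$ with even external field $-0.195\log((1-s^2)/4)$. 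For $\alpha_1,\alpha_2$ this small the Mhaskar--Rakhmanov--Saff characterization should yield $\supp(\mu_w)=[0,1]$, which must be verified. On its support, differentiation of the Frostman identity $U^{\mu_w}+Q\equiv F_w$ produces a singular integral equation for the density $d\mu_w/dx$, which is inverted by the standard Tricomi/airfoil formula to give a closed-form expression for the density. Substituting back into $I_w$ writes $V_w$ as a combination of elementary (beta-type and logarithmic) integrals whose numerical value matches $-\log(0.1045575588)$.

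The main obstacle is numerical precision, since the inequality $-2\log c_w/(4\alpha+3)\ge 0.99035$ is tight: the two sides agree to four digits. One must check the full-support hypothesis carefully, invert the singular integral equation without sign errors, and evaluate the resulting integrals to roughly nine significant digits in order to confirm the value of $c_w$ given in the statement. Once that is done, the remainder of the proof is the arithmetic carried out in the first paragraph together with a direct appeal to Theorem \ref{thm1.1}.
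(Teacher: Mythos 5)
Your reduction of the corollary to the numerical determination of $c_w$ is correct, and the arithmetic in your first paragraph ($\alpha=0.39$, $4\alpha+3=4.56$, $-2\log(0.1045575588)/4.56\approx 0.99036$) matches what the paper needs. But the substantive step --- computing $V_w=-\log c_w$ --- rests on a claim that is false: for this weight the support of $\mu_w$ is \emph{never} $[0,1]$, no matter how small $\alpha_1,\alpha_2>0$ are. Since $Q(x)=-\log w(x)=-\alpha_1\log x-\alpha_2\log(1-x)\to+\infty$ as $x\to 0^+$ or $x\to 1^-$, the Frostman identity \eqref{2.5} cannot hold at the endpoints, and indeed $S_w\subset[a,b]\setminus Z$ always (this is the first observation in the proof of Theorem \ref{thm2.1}, and also part of Lemma \ref{lem3.3}). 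Concretely, Corollary \ref{cor2.4} gives $S_w=[a_1,b_1]$ with $a_1,b_1$ as in \eqref{2.15}--\eqref{2.16}; for $p_1=p_2=0.195$ one gets $r_1=r_2=0.195/1.39$ and $[a_1,b_1]\approx[0.0201,0.9799]$, a proper subinterval. If you invert the airfoil equation on all of $[0,1]$ as you propose, the resulting ``density'' is negative near the endpoints (it behaves like a negative multiple of $x^{-1/2}$ as $x\to 0^+$), so it is not the equilibrium measure, and the value of $V_w$ you would extract does not equal $-\log(0.1045575588)$. Your intuition that small exponents give full support is the reverse of the truth here: smallness of $\alpha_i$ only makes $[a_1,b_1]$ closer to $[0,1]$, never equal to it.

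The paper's route is otherwise the one you sketch: it invokes Corollary \ref{cor2.4} for the explicit density \eqref{2.14} on the correct support $[a_1,b_1]$, and then evaluates $-\log c_w = U^{\mu_w}(a_1)-\log w(a_1)-\int\log w\,d\mu_w$ using \eqref{2.5}, before plugging into Theorem \ref{thm1.1}. To repair your argument, replace the full-support ansatz by the Mhaskar--Rakhmanov--Saff determination of the endpoints (equivalently, impose that the inverted density vanish like a square root at both endpoints, which yields exactly the two equations $a_1b_1=r_1^2$ and $(1-a_1)(1-b_1)=r_2^2$ used in the paper), and then carry out the numerical evaluation on $[a_1,b_1]$.
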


It is natural to try improving the bound \eqref{1.11} by choosing
a weight with a proper combination of factors $Q_{m_i}(x)$ and
exponents $\alpha_i$. The most interesting question is, of course,
whether one can find a weight $w(x)$ of the form \eqref{1.12} such
that
\[
\frac{-2 \log{c_w}}{4\alpha+3} = 1?
\]
It turns out this is impossible to achieve for any {\em fixed}
weight of the type \eqref{1.12}. The reason for such a conclusion
transpires from the error term in \eqref{1.13}, which is ``too
good." Indeed, it is known from Littlewood's theorem that the
difference $\int_1^x \psi(t)\ dt - x^2/2$ takes both positive and
negative values of the amplitude $c x^{3/2},\ c>0,$ infinitely
often as $x\to +\infty.$ This is conveniently written in the
notation
\[
\int_1^x \psi(t)\ dt - \frac{x^2}{2} = \Omega_{\pm}(x^{3/2}) \quad
\mbox{as } x \to +\infty
\]
(cf. \cite[pp. 91-92]{Ing}). Hence the correct error term should
be of the order $O(x^{3/2}).$ Relating this to \eqref{1.9} and
\eqref{1.13}, we obtain in such an indirect way the following.

\begin{proposition} \label{prop1.2}
Given a weight $w(x)$ of the form \eqref{1.12}, we have
\begin{equation} \label{1.14}
B(w):=\frac{-2 \log{c_w}}{4\alpha+3} < 1,
\end{equation}
where $\alpha=\sum_{i=1}^k \alpha_im_i.$
\end{proposition}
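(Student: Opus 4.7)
The plan is an indirect argument: I would assume $B(w) \ge 1$ and derive a contradiction with Littlewood's classical oscillation theorem for $\int_1^x \psi(t)\,dt - x^2/2$, which is precisely the comparison highlighted in the discussion preceding the proposition.

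Suppose, for contradiction, that some weight $w$ of the form \eqref{1.12} satisfies $B(w) \ge 1$. Applying Theorem \ref{thm1.1} to this $w$ gives
\[
\int_1^x \psi(t)\,dt \ge \frac{x^2}{2} + O(x \log^2 x) \quad \mbox{as } x \to +\infty,
\]
which, after moving the main term to the left, yields a one-sided lower bound
\[
\int_1^x \psi(t)\,dt - \frac{x^2}{2} \ge -C\, x \log^2 x \quad \mbox{for all } x \ge x_0,
\]
with suitable constants $C > 0$ and $x_0 > 0$.

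Next I would invoke the $\Omega_-$ half of the Littlewood result cited in the text: there exist a constant $c > 0$ and an unbounded sequence $x_n \to +\infty$ with
\[
\int_1^{x_n} \psi(t)\,dt - \frac{x_n^2}{2} \le -c\, x_n^{3/2}.
\]
Specialising the preceding lower bound to the sequence $\{x_n\}$ gives
\[
-C\, x_n \log^2 x_n \le -c\, x_n^{3/2}, \qquad \mbox{equivalently,} \qquad c\, x_n^{1/2} \le C \log^2 x_n,
\]
which fails for all sufficiently large $n$. This contradiction forces $B(w) < 1$.

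The main obstacle is essentially conceptual: one must notice that Theorem \ref{thm1.1} secretly delivers an error term on the scale $x \log^2 x$, which is strictly smaller than the Littlewood oscillation scale $x^{3/2}$; given this observation, the proof reduces to a one-line comparison of growth rates. The case $B(w) > 1$ could in fact be excluded from the Prime Number Theorem $\int_1^x \psi(t)\,dt \sim x^2/2$ alone, but the borderline case $B(w) = 1$ genuinely requires Littlewood's stronger $\Omega_{\pm}$-theorem, and it is that case which gives the proposition its content.
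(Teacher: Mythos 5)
Your proposal is correct and follows essentially the same route as the paper: the author justifies Proposition \ref{prop1.2} precisely by combining the bound \eqref{1.13} (whose error term $O(x\log^2 x)$ is ``too good'') with the $\Omega_{-}$ part of Littlewood's theorem, exactly as you do, and your remark that $B(w)>1$ is already excluded by \eqref{1.9} while $B(w)=1$ needs Littlewood matches the paper's citation of both facts. Nothing to add.
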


We should also note that if the Riemann hypothesis is true, then
\[
\int_1^x \psi(t)\ dt - \frac{x^2}{2} = O(x^{3/2}) \quad \mbox{as }
x \to +\infty
\]
(see Theorem 30 in \cite[p. 83]{Ing}). It would be very
interesting to find a direct potential theoretic argument
explaining \eqref{1.14}. Although \eqref{1.13} cannot provide a
proof of the PNT for a fixed weight $w$, this does not preclude
the possibility that such a proof can be obtained by finding a
{\em sequence} of weights $w_n$ with $B(w_n)\to 1$, as $n\to
\infty.$ On the other hand, we did not observe a numerical
improvement of the estimate \eqref{1.11} when using further
factors of the one-dimensional integer Chebyshev polynomials for
the weight $w$, beyond the factors $x$ and $1-x$ (see \cite{Mon},
\cite{Chu} and \cite{Pri}). Thus one needs a better insight into
the arithmetic nature of such factors, to address the problem
stated below.

\begin{problem} \label{prob1.3}
For $w(x)$ as in \eqref{1.12} and $\alpha=\sum_{i=1}^k
\alpha_im_i,$ find
\begin{equation} \label{1.15}
B:=\sup_w \frac{-2 \log{c_w}}{4\alpha+3}.
\end{equation}
If $B=1$ then find a sequence of weights that gives this value. If
$B<1$ then investigate whether $B$ is attained for a weight of the
form \eqref{1.12}.
\end{problem}

The solution of this problem also requires a detailed knowledge of
the potential theory with external fields generated by the weights
\eqref{1.12}, which is discussed in the following section.

We remark that the scope of the multivariate Gelfond-Schnirelman
method still remains much wider than the approach proposed by Nair
and Chudnovsky. Indeed, the weighted Vandermonde determinant
$V^w_n(x_1,\ldots,x_n)$ of \eqref{1.10} is a very special case of
a multivariate polynomial with small norm, which is unlikely best
possible. It is of great interest to design other sequences of
polynomials providing good bounds for the arithmetic functions,
along the discussed lines. Among the natural candidates are the
multivariate Vandermonde determinants (see \cite{Zah} and
\cite{BC99}) and other sequences of minimal polynomials
\cite{BC00}. This subject is closely related to pluripotential
theory \cite{Kli}.

\section{Potential theory with external fields}

We consider a special case of the weighted energy problem on a
segment of the real line $[a,b]$, which is associated with the
``polynomial-type" weights \eqref{1.12}. A comprehensive treatment
of the potential theory with external fields, or weighted
potential theory, is contained in the book of Saff and Totik
\cite{ST}, together with historical remarks and numerous
references. It is convenient to rewrite the weight function in the
following more general form:
\begin{align} \label{2.1}
w(x)=A\prod_{i=1}^K |x-z_i|^{p_i}, \qquad x \in [a,b],
\end{align}
where $A>0,\ p_i>0$ and $z_i\in\C$. Let ${\mathcal M}([a,b])$ be
the set of positive unit Borel measures supported on $[a,b]$. For
any measure $\mu\in {\mathcal M}([a,b])$ and weight $w$ of
\eqref{2.1}, we define the energy functional
\begin{align} \label{2.2}
I_{w} (\mu)&:= \int\!\!\int \log \dis\frac{1}{|z-t|w(z)w(t)} \ d
\mu(z)d \mu(t) \\ \nonumber &= \int\!\!\int \log \dis
\frac{1}{|z-t|} \ d \mu(z)d \mu(t) - 2 \int \log w(t)\, d\mu(t),
\end{align}
and consider the minimum energy problem
\begin{equation} \label{2.3}
V_{w}:= \dis\inf_{\mu \in {\mathcal M}([a,b])} I_{w}(\mu).
\end{equation}
It follows from Theorem I.1.3 of \cite{ST} that  $V_{w}$ is
finite, and there exists a unique equilibrium measure $\mu_{w} \in
{\mathcal{M}}([a,b])$ such that $I_{w} (\mu_{w}) = V_{w}$. Thus
$\mu_w$ minimizes the energy functional \eqref{2.2} in presence of
the external field generated by the weight $w$. Furthermore, we
have for the potential of $\mu_w$ that
\begin{equation} \label{2.4}
U^{\mu_{w}}(x)-\log w(x) \geq F_{w}, \qquad x \in [a,b],
\end{equation}
and
\begin{equation} \label{2.5}
U^{\mu_{w}}(x)-\log w(x) = F_{w}, \qquad x \in S_w,
\end{equation}
where $U^{\mu_w}(x):=-\dis\int\log|x-t|\,d\mu_w(t), \ F_{w}:=V_{w}
+ \dis\int \log w(t) d\mu_{w}(t)$ and $S_w:={\rm supp}\, \mu_{w}$
(see Theorems I.1.3 and I.5.1 in \cite{ST}) . The weighted
capacity of $[a,b]$ is defined by
\begin{equation} \label{2.6}
\textup{cap}([a,b],w):=e^{-V_w}.
\end{equation}
In agreement with the notation of Section 1, we set
\[
c_w:=\textup{cap}([0,1],w).
\]
If $w\equiv 1$ on $[a,b]$, then we obtain the classical
logarithmic capacity $\textup{cap}([a,b],1)= (b-a)/4$ (cf.
\cite{Ran}).

The support $S_w$ plays a crucial role in determining the
equilibrium measure $\mu_w$ itself, as well as other components of
this weighted energy problem. Indeed, if $S_w$ is known then
$\mu_w$ can be found as a solution of the singular integral
equation
\[
\int\log\frac{1}{|x-t|}\,d\mu(t)-\log w(x) = F, \qquad x \in S_w,
\]
where $F$ is a constant (cf. \eqref{2.5} and \cite[Ch. IV]{ST}).
For $w$ given by \eqref{2.1} or \eqref{1.12}, this equation can be
solved by potential theoretic methods, using balayage techniques,
so that $\mu_w$ is expressed as a linear combination of harmonic
measures (see Lemma \ref{lem3.3} and \cite{Pri}). We follow
another path here, via the methods of singular integral equations,
which gives a more explicit solution. This approach for
polynomial-type weights was suggested by Chudnovsky \cite{Chu} and
developed further by Amoroso \cite{Amo}. For more general weights,
one should consult Chapter IV of \cite{ST} and the paper of Deift,
Kreicherbauer and McLaughlin \cite{DKM}. We give an explicit form
of the equilibrium measure and describe its support in the
following result.

\begin{theorem} \label{thm2.1}
Let $Z:=\bigcup_{i=1}^K \{z_i\}\subset [a,b]$ be the set of zeros
for $w$ of \eqref{2.1}, where $z_1=a<z_2<\ldots<z_K=b$. There
exist an integer $L,\ 1 \le L \le K-1,$ a polynomial $P(x) =
x^{K-L-1}+\ldots\in\R_{K-L-1}[x]$, and $L$ intervals
$[a_l,b_l]\subset [a,b]\setminus Z$, with
$a<a_1<b_1<a_2<b_2<\ldots<a_L<b_L<b$,  such that
\begin{equation} \label{2.7}
S_w=\bigcup_{l=1}^L\, [a_l,b_l],
\end{equation}
and the equilibrium measure $\mu_w$ is given by
\begin{equation} \label{2.8}
d\mu_{w}(x)=(-1)^{L+l+1}\ \frac{(1+p) \sqrt{|R(x)|}\,P(x)}{\pi
\prod_{j=1}^K (x-z_j)}\, dx, \qquad x \in [a_l,b_l],
\end{equation}
where $l=1,\ldots,L,$ $p:=\sum_{j=1}^K p_j$ and
$R(x):=\prod_{l=1}^L (x-a_l)(x-b_l)$.

Furthermore, the polynomial $P(x)$ and the endpoints of $S_w$
satisfy the equations
\begin{equation} \label{2.9}
P(z_j)=(-1)^{L+l}\ \frac{p_j\prod_{m\neq j}
(z_j-z_m)}{(1+p)\sqrt{|R(z_j)|}}, \qquad z_j\in [b_l,a_{l+1}],\
j=1,\ldots,K,
\end{equation}
where we set $b_0=-\infty,\ a_{L+1}=+\infty,$ and the equations
\begin{equation} \label{2.10}
\int_{b_l}^{a_{l+1}} \frac{\sqrt{|R(x)|}\,P(x)}{ \prod_{j=1}^K
(x-z_j)}\, dx=0, \qquad l=1,\ldots,L-1,
\end{equation}
where the integrals are understood as Cauchy principal values.
\end{theorem}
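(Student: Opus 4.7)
The plan is to convert the Euler-Lagrange characterization \eqref{2.4}--\eqref{2.5} into a singular integral equation on $S_w$, solve that equation by a Riemann-Hilbert argument, and then pin down the endpoints $a_l,b_l$ and the coefficients of $P$ from residue conditions at the $z_j$ and integral conditions across the gaps.

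First I would establish \eqref{2.7}. Because $-\log w$ has $+\infty$ singularities at each $z_j$, the equilibrium inequality \eqref{2.4} forces $Z\cap S_w=\varnothing$. On each subinterval of $[a,b]\setminus Z$, the external field $-\log w$ is real-analytic and strictly convex, and a standard argument (as in Ch.~IV of \cite{ST}) shows that $S_w$ is a finite union of closed intervals; combined with the fact that each $z_j$ separates $S_w$, one gets the decomposition $S_w=\bigcup_{l=1}^L [a_l,b_l]$ with $a<a_1<b_1<\dots<a_L<b_L<b$ and $1\le L\le K-1$, where the interior zeros $z_2,\dots,z_{K-1}$ are distributed into the gaps $(b_l,a_{l+1})$.

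Differentiating \eqref{2.5} in the interior of $S_w$ yields the finite Hilbert transform equation
\[
\mathrm{p.v.}\int_{S_w}\frac{d\mu_w(t)}{t-x}=\sum_{j=1}^{K}\frac{p_j}{x-z_j}.
\]
Define the Cauchy-type function
\[
\Phi(z):=\int_{S_w}\frac{d\mu_w(t)}{z-t}+\sum_{j=1}^{K}\frac{p_j}{z-z_j},\qquad z\in\C\setminus(S_w\cup Z).
\]
Sokhotski--Plemelj together with the above equation gives $\Phi_++\Phi_-=0$ on $\mathrm{int}\,S_w$ and $\Phi_+-\Phi_-=-2\pi i\,\mu_w'$. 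Choose the branch of $\sqrt{R(z)}$ analytic on $\C\setminus S_w$ with $\sqrt{R(z)}/z^L\to 1$ at infinity; then $(\sqrt{R})_+=-(\sqrt{R})_-$ on $S_w$, so $\Phi(z)/\sqrt{R(z)}$ extends analytically across $S_w$ and has only simple poles at the $z_j$. Since $\int d\mu_w=1$ forces $\Phi(z)\sim(1+p)/z$ at infinity, the quotient $\Phi/\sqrt{R}$ behaves like $(1+p)/z^{L+1}$ there, hence is the rational function
\[
\frac{\Phi(z)}{\sqrt{R(z)}}=\frac{(1+p)\,P(z)}{\prod_{j=1}^{K}(z-z_j)},\qquad P\in\R_{K-L-1}[z]\ \text{monic.}
\]
Reading off the jump $\Phi_+-\Phi_-$ across $[a_l,b_l]$ and tracking the signs of $(\sqrt{R})_+$ and of $\prod_j(x-z_j)$ on that subinterval produces the explicit density \eqref{2.8} with the prefactor $(-1)^{L+l+1}$.

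The remaining identities \eqref{2.9} and \eqref{2.10} come from matching. The residue identity $\mathrm{Res}_{z=z_j}\Phi(z)=p_j$ gives $(1+p)P(z_j)/\prod_{m\neq j}(z_j-z_m)=p_j/\sqrt{R(z_j)}$, which upon identifying the sign $(-1)^{L+l}$ of $\sqrt{R(z_j)}$ in the gap $[b_l,a_{l+1}]$ containing $z_j$ becomes \eqref{2.9}. The $L-1$ conditions \eqref{2.10} express that the constant $F_w$ in \eqref{2.5} must agree on every component, equivalently $\int_{b_l}^{a_{l+1}}\Phi(x)\,dx=0$ for $l=1,\dots,L-1$, where the interior singularities of $\Phi$ at the $z_j$ in the gap are read as Cauchy principal values; the mass normalization $\int d\mu_w=1$ is then automatic from the $1/z$ coefficient of $\Phi$ at infinity. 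The main obstacle will be the careful sign bookkeeping that produces the precise prefactors $(-1)^{L+l+1}$ in \eqref{2.8} and $(-1)^{L+l}$ in \eqref{2.9}, and simultaneously guarantees that the computed density is non-negative on each $[a_l,b_l]$; a subsidiary difficulty is establishing that the discrete system \eqref{2.9}--\eqref{2.10} admits a unique solution consistent with the prescribed ordering $a<a_1<\dots<b_L<b$ and uniquely determines $L$, which can be handled by a continuity/degree argument in the exponents $p_i$ or by invoking the general framework of \cite{DKM}.
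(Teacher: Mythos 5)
Your overall strategy is essentially the paper's own. The determination of $S_w$ (exclusion of $Z$ by \eqref{2.4}, convexity of $-\log w$ between consecutive zeros, hence at most one component per gap) is identical, and the analytic core is the same: the paper also reduces \eqref{2.5} by differentiation to the singular integral equation $\int d\mu_w(t)/(t-x)=(\log w(x))'$ on $S_w$ and solves it by Cauchy--Plemelj methods, except that it quotes this step as Theorem 1.38 of \cite{DKM} (obtaining $d\mu_w=\Re F(x)\,dx$ for an explicit Cauchy-type $F$) and then evaluates $F$ by partial fractions via Lemma \ref{lem3.1}. Your sign bookkeeping is consistent: with the branch \eqref{5.7}, the relations $\Phi_+-\Phi_-=-2\pi i\,\mu_w'$ and $(\sqrt{R})_+=(-1)^{L+l}\,i\sqrt{|R|}$ on $[a_l,b_l]$ reproduce exactly the prefactors in \eqref{2.8} and \eqref{2.9}, and your derivation of \eqref{2.10} from the constancy of $F_w$ across components is the same computation the paper carries out using \eqref{5.25}. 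Note also that Theorem \ref{thm2.1} asserts only the \emph{existence} of $L$, $P$ and the intervals satisfying \eqref{2.9}--\eqref{2.10}; the unique solvability of that discrete system, which you list as a subsidiary difficulty, is not part of the claim.

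The one genuine gap is the assertion that $\Phi/\sqrt{R}$ ``extends analytically across $S_w$ and has only simple poles at the $z_j$.'' The jump cancellation gives continuation only across the \emph{interior} of each $[a_l,b_l]$; at the $2L$ endpoints you are left with isolated singularities, and the a priori bound $\Phi(z)=O(|z-a_l|^{-1})$ still permits $\Phi/\sqrt{R}$ to have simple poles there (the hard-edge scenario $\mu_w'\sim |x-a_l|^{-1/2}$). If such poles were present, your decay count at infinity would produce a rational function with numerator of degree $K+L-1$ over $R(z)\prod_j(z-z_j)$, and the density would blow up at the endpoints rather than vanish like $\sqrt{|R|}$, so \eqref{2.8} and the degree/monicity claims for $P$ would fail. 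Ruling this out requires either an argument from the variational inequality \eqref{2.4} (a hard edge at a free endpoint interior to $[a,b]$ would force $U^{\mu_w}-\log w<F_w$ just outside $S_w$) or the endpoint analysis of \cite{DKM}; relatedly, applying Plemelj at all presupposes that $\mu_w$ has a density with enough interior regularity, which likewise must be quoted or proved. This is exactly where the paper diverges from you on the degree bound: it does not use decay at infinity, but instead compares \eqref{5.22} with a second representation of $\mu_w$ as a combination of harmonic measures (Lemmas \ref{lem3.2} and \ref{lem3.3}) to get $\deg P\le K-L-1$, and then extracts the leading coefficient $1$ from the total-mass identity \eqref{5.24}--\eqref{5.25}. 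Your route is cleaner and yields monicity for free, but only once the endpoint behavior is settled.
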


Recall that we need the quantity $-\log \textup{cap}([a,b],w)=V_w$
for Theorem \ref{thm1.1}. This can be found from \eqref{2.5} as
\[
V_w=U^{\mu_{w}}(x)-\log w(x)-\int\log w \,d\mu_w,
\]
for any $x\in S_w.$

The assumption that the weight $w$ vanishes at the endpoints of
$[a,b]$ seems appropriate in this case, due to the role of factors
$x$ and $1-x$, for $w(x)$ on $[0,1]$, in the work of Nair and
Chudnovsky. Other cases of weights \eqref{2.1} with real zeros can
be handled similarly, along the lines of this paper. Perhaps, a
more interesting problem is to consider weights with complex
zeros, when $-\log w(x)$ is not piecewise convex on $[a,b]$.

If the support $S_w$ consists of $L=K-1$ intervals, then
$P(x)\equiv 1$ in Theorem \ref{thm2.1}, and we obtain the
following result.

\begin{corollary} \label{cor2.2}
If $L=K-1$ in Theorem \ref{thm2.1}, then
\begin{equation} \label{2.11}
d\mu_{w}(x)=\frac{(1+p) \sqrt{|R(x)|}}{\pi \prod_{j=1}^K
|x-z_j|}\, dx, \qquad x \in S_w.
\end{equation}
Furthermore, the following equations hold true:
\begin{equation} \label{2.12}
\sqrt{|R(z_j)|} = \frac{p_j}{1+p} \prod_{m\neq j} |z_j-z_m|,
\qquad j=1,\ldots,K,
\end{equation}
and
\begin{equation} \label{2.13}
\int_{b_l}^{a_{l+1}} \frac{\sqrt{|R(x)|}}{ \prod_{j=1}^K
(x-z_j)}\, dx=0, \qquad l=1,\ldots,K-2.
\end{equation}
\end{corollary}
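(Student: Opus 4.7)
My plan is to derive Corollary \ref{cor2.2} as a direct specialization of Theorem \ref{thm2.1} to the case $L=K-1$. The key observation is that the polynomial $P(x)$ in Theorem \ref{thm2.1} is forced to have degree $K-L-1=0$, and since it is monic, $P(x)\equiv 1$. All three claims \eqref{2.11}--\eqref{2.13} then reduce to replacing the signed formulas \eqref{2.8}--\eqref{2.10} by their unsigned analogues, which amounts to careful sign bookkeeping.

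First I would pinpoint the location of each support interval. Because $[a,b]\setminus Z$ decomposes into exactly $K-1$ open gaps $(z_l,z_{l+1})$, and the $L=K-1$ support intervals $[a_l,b_l]$ lie in them in the prescribed order $a<a_1<b_1<\cdots<a_{K-1}<b_{K-1}<b$, the only consistent arrangement is $[a_l,b_l]\subset(z_l,z_{l+1})$ for $l=1,\ldots,K-1$. For $x\in[a_l,b_l]$, exactly $K-l$ of the linear factors $x-z_j$ are negative, so $\prod_{j=1}^K(x-z_j)=(-1)^{K-l}\prod_{j=1}^K|x-z_j|$. Combined with the prefactor $(-1)^{L+l+1}$ in \eqref{2.8}, the net sign equals $(-1)^{L+l+1+K-l}=(-1)^{2K}=+1$, which is consistent with positivity of $\mu_w$ and produces \eqref{2.11}.

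For \eqref{2.12}, I would specialize \eqref{2.9} using $P(z_j)=1$. Since $z_j$ lies in the gap $[b_{j-1},a_j]$ between consecutive support intervals (with the conventions $b_0=-\infty$ and $a_{L+1}=+\infty$ absorbing the endpoint cases $j=1$ and $j=K$), the index $l$ in \eqref{2.9} equals $j-1$. The product $\prod_{m\neq j}(z_j-z_m)$ has $K-j$ negative factors, contributing a sign $(-1)^{K-j}$, which combines with $(-1)^{L+l}=(-1)^{K+j-2}$ to give an overall factor of $(-1)^{2K-2}=+1$; solving the resulting identity for $\sqrt{|R(z_j)|}$ yields \eqref{2.12}. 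Finally \eqref{2.13} is nothing more than \eqref{2.10} specialized to $P\equiv 1$.

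The only real point of care is the boundary handling in \eqref{2.9} at $j=1$ and $j=K$, where the conventions $b_0=-\infty$ and $a_{L+1}=+\infty$ are essential for making sense of the constraint $z_j\in[b_l,a_{l+1}]$. Beyond this, I do not anticipate any genuine obstacle: once $P\equiv 1$ and the positioning of the support intervals are in place, the corollary is a mechanical consequence of the signed formulas in Theorem \ref{thm2.1}.
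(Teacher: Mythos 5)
Your proposal is correct and follows essentially the same route as the paper: with $L=K-1$ the monic polynomial $P$ has degree $0$, hence $P\equiv 1$, each gap $(z_l,z_{l+1})$ contains exactly one support interval so that $z_j\in(b_{j-1},a_j)$, and the sign counts $(-1)^{K-l}$ for $\prod_j(x-z_j)$ on $[a_l,b_l]$ and $(-1)^{K-j}$ for $\prod_{m\neq j}(z_j-z_m)$ cancel the prefactors in \eqref{2.8} and \eqref{2.9} to yield \eqref{2.11} and \eqref{2.12}, while \eqref{2.13} is \eqref{2.10} with $P\equiv1$. No gaps.
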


\begin{remark} \label{rem2.3}
\textup{Amoroso \cite{Amo} studied the weighted energy problem for
the weights of Theorem \ref{thm2.1} (expressed in slightly
different terms). In particular, Theorem 2.2 of \cite{Amo} states
that the equilibrium measure always has the form \eqref{2.11},
which is not true as shown in the next section. The main
shortcoming is in the assumption of \cite{Amo} that the support
$S_w$ {\em always} consists of $K-1$ intervals, one less than the
number of zeros of $w$. Also, \cite{Amo} defines the endpoints of
the support as solutions of \eqref{2.12}-\eqref{2.13}, without
analyzing that these solutions exist and fall within the needed
range. Assuming that \eqref{2.12}-\eqref{2.13} hold true, the
representation \eqref{2.11} is then deduced from those equations
in \cite{Amo}. We emphasize that the results of \cite{Amo} on the
irrationality measures of logarithms, stated in Theorems 4.2 and
4.3, are correct. In those applications, the parameters are
selected so that \eqref{2.12}-\eqref{2.13} are valid, and the
support indeed has $K-1$ intervals.}
\end{remark}

Note that equations \eqref{2.9}-\eqref{2.10} (correspondingly,
\eqref{2.12}-\eqref{2.13}) may be used to find the coefficients of
$P(x)$ and the endpoints of $S_w$. Thus we have $K-L-1$
coefficients and $2L$ endpoints to find, which gives $K+L-1$
unknowns and the same number of equations
\eqref{2.9}-\eqref{2.10}. For example, if $K=2$, i.e., we have the
so-called Jacobi-type weight $w$ on $[a,b]$, then $L=1$ and
\eqref{2.12} gives just two equations for the endpoints of
$S_w=[a_1,b_1]$. It is easy to solve them explicitly, and find the
well known representation for $S_w$ and $\mu_w$ from Corollary
\ref{cor2.2} (see, e.g., Examples IV.1.17 and IV.5.2 of
\cite{ST}).

\begin{corollary} \label{cor2.4}
Suppose $w(x)=x^{p_1} (1-x)^{p_2}, \ x\in[0,1],$ where
$p_1,p_2>0.$ Then
\begin{equation} \label{2.14}
d\mu_{w}(x)=\frac{(1+p_1+p_2) \sqrt{(x-a_1)(b_1-x)}}{\pi x(1-x)}\,
dx, \qquad x \in [a_1,b_1].
\end{equation}
The endpoints of the support are
\begin{equation} \label{2.15}
a_1=\frac{1+r_1^2-r_2^2-\sqrt{(1+r_1^2-r_2^2)^2-4r_1^2}}{2}
\end{equation}
and
\begin{equation} \label{2.16}
b_1=\frac{1+r_1^2-r_2^2+\sqrt{(1+r_1^2-r_2^2)^2-4r_1^2}}{2},
\end{equation}
where we set
\[
r_1:=\frac{p_1}{1+p_1+p_2} \quad \mbox{and} \quad
r_2:=\frac{p_2}{1+p_1+p_2}.
\]

\end{corollary}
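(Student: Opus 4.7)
The plan is to specialize Theorem \ref{thm2.1} (or, equivalently, Corollary \ref{cor2.2}) to the case $K=2$ with $z_1=0$, $z_2=1$, and $p_1,p_2>0$, and then solve the resulting two algebraic equations for the endpoints $a_1,b_1$ of $S_w$.

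First, since $K=2$, the constraint $1\le L\le K-1$ in Theorem \ref{thm2.1} forces $L=1$, so the support is necessarily a single interval $S_w=[a_1,b_1]$ with $0<a_1<b_1<1$, and $P(x)\in\R_{K-L-1}[x]=\R_0[x]$ must equal $1$. Consequently Corollary \ref{cor2.2} applies with $p=p_1+p_2$ and $R(x)=(x-a_1)(x-b_1)$, and the density formula \eqref{2.11} collapses directly to \eqref{2.14} because $\prod_{j=1}^2|x-z_j|=x(1-x)$.

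Second, I would write down the equations \eqref{2.12} at the two zeros of $w$. For $z_1=0$ we get $\sqrt{|R(0)|}=\sqrt{a_1b_1}=r_1$, and for $z_2=1$ we get $\sqrt{|R(1)|}=\sqrt{(1-a_1)(1-b_1)}=r_2$. Squaring and expanding yields the symmetric-function system
\begin{equation*}
a_1b_1=r_1^2,\qquad a_1+b_1=1+r_1^2-r_2^2,
\end{equation*}
so $a_1$ and $b_1$ are the two roots of the quadratic $t^2-(1+r_1^2-r_2^2)t+r_1^2=0$. The quadratic formula then delivers \eqref{2.15} and \eqref{2.16} immediately. Note that equations \eqref{2.13} are vacuous here since the index range $l=1,\ldots,K-2$ is empty.

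The only genuine thing to check — and arguably the sole obstacle — is that the formulas produce admissible endpoints: one must verify that the discriminant $(1+r_1^2-r_2^2)^2-4r_1^2$ is nonnegative and that both roots lie in $(0,1)$. This follows from $r_1,r_2>0$ and $r_1+r_2<1$ (which is just $p_1+p_2<1+p_1+p_2$): a short factorization shows
\begin{equation*}
(1+r_1^2-r_2^2)^2-4r_1^2=\bigl((1-r_1)^2-r_2^2\bigr)\bigl((1+r_1)^2-r_2^2\bigr),
\end{equation*}
and both factors on the right are positive since $0<r_2<1-r_1<1+r_1$. Then $a_1b_1=r_1^2>0$ shows both roots have the same sign, while $a_1+b_1>0$ forces them to be positive; finally $(1-a_1)(1-b_1)=r_2^2>0$ together with $a_1+b_1<2$ (which is equivalent to $r_1^2-r_2^2<1$, again a consequence of $r_1+r_2<1$) gives $a_1,b_1<1$. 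This confirms the existence of a valid support of the prescribed form, completing the derivation.
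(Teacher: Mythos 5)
Your proposal is correct and follows essentially the same route as the paper: specialize to $K=2$, $L=1$ so that Corollary \ref{cor2.2} applies, read off \eqref{2.14} from \eqref{2.11}, and extract the system $a_1b_1=r_1^2$, $(1-a_1)(1-b_1)=r_2^2$ from \eqref{2.12}, whose solution is \eqref{2.15}--\eqref{2.16}. Your additional verification that the discriminant factors as $\bigl((1-r_1)^2-r_2^2\bigr)\bigl((1+r_1)^2-r_2^2\bigr)>0$ and that both roots lie in $(0,1)$ is a welcome check that the paper leaves implicit.
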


\smallskip

The connection between the potential theory with external fields
and this version of the Gelfond-Schnirelman method arises in the
need for asymptotics of the weighted Vandermonde determinant
\eqref{1.10}. It is known that
\begin{equation} \label{2.30}
\lim_{n\to \infty}\left(\max_{x_1,\dots,x_n\in [a,b]}
|V_n^w(x_1,\ldots,x_n)|\right)^{\frac{2}{n(n-1)}} =
\textup{cap}([a,b],w)
\end{equation}
(see Theorem III.1.3 of \cite{ST}). The quantity on the left-hand
side of \eqref{2.30} is called the weighted transfinite diameter
of $[a,b]$. In the case $w\equiv 1$, it was introduced by Fekete
\cite{Fek} for arbitrary compact sets in the plane. Szeg\H{o}
\cite{Sze} showed that the transfinite diameter coincides with the
logarithmic capacity, so that \eqref{2.30} is a generalization of
his result. We quantify the rate of convergence in \eqref{2.30}.
\begin{lemma} \label{lem2.1}
Let $w$ be as in \eqref{2.1}. There exist constants $d=d(w)>1$ and
$D=D(w)>0$ such that
\begin{align} \label{2.31}
\left(\textup{cap}([a,b],w)\right)^{n(n-1)} \le \max_{[a,b]^n}
(V^w_n)^2 \le D\,d^{n\log^2{n}}\,
\left(\textup{cap}([a,b],w)\right)^{n(n-1)}.
\end{align}
\end{lemma}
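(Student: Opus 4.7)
The first inequality in \eqref{2.31} comes from monotonicity of the weighted transfinite diameter, which I would establish via the algebraic identity
\begin{equation*}
\prod_{k=1}^{n+1} V_n^w(x_1,\ldots,\hat x_k,\ldots,x_{n+1}) \;=\; \bigl(V_{n+1}^w(x_1,\ldots,x_{n+1})\bigr)^{n-1}.
\end{equation*}
Its verification is purely combinatorial: each difference $(x_i-x_j)$ appears exactly $n-1$ times on the left (once in every $V_n^w(\hat x_k)$ with $k\ne i,j$), matching the exponent on the right, while each weight factor $w(x_i)$ appears on both sides to the total power $n(n-1)$. Evaluating the identity at a maximizer of $|V_{n+1}^w|$ and bounding each of the $n+1$ factors on the left by $M_n:=\max|V_n^w|$ yields $M_{n+1}^{n-1}\le M_n^{n+1}$, which is equivalent to $d_n^w:=M_n^{2/(n(n-1))}$ being non-increasing. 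Combined with the limit \eqref{2.30}, this forces $d_n^w\ge\textup{cap}([a,b],w)$ for every $n$, which is exactly the lower bound in \eqref{2.31}.

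For the upper bound I would need a quantitative rate, namely $d_n^w\le\textup{cap}([a,b],w)\bigl(1+O(\log^2 n/n)\bigr)$. My plan is a smoothing argument converting the continuous bound $I_w(\mu)\ge V_w$ into a lower bound on the off-diagonal discrete energy. Given any $x_1,\ldots,x_n\in[a,b]$, set $\nu_n=n^{-1}\sum_k\delta_{x_k}$; a short calculation gives
\begin{equation*}
-\log|V_n^w(x_1,\ldots,x_n)| \;=\; \frac{n^2}{2}\iint_{z\neq t}\log\frac{1}{|z-t|\,w(z)w(t)}\,d\nu_n(z)\,d\nu_n(t).
\end{equation*}
Convolving $\nu_n$ with a smooth probability bump $\phi_\epsilon$ of width $\epsilon$ produces a measure $\tilde\nu_n$ of finite weighted energy, so $I_w(\tilde\nu_n)\ge V_w$ by \eqref{2.3}. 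Expanding $I_w(\tilde\nu_n)$ recovers the off-diagonal discrete energy modulo (i) the $n$ bump self-energies, each of order $n^{-2}\log(1/\epsilon)$; (ii) an approximation error from replacing $\log|z-t|$ by its $\phi_\epsilon$-regularization on close pairs; and (iii) a similar error for $\log w$. Choosing $\epsilon=n^{-A}$ with $A$ sufficiently large and multiplying by $n^2/2$ should yield
\begin{equation*}
-\log|V_n^w| \;\ge\; \tfrac{n(n-1)}{2}\,V_w \;-\; O\bigl(n\log^2 n\bigr),
\end{equation*}
whose exponential is the right-hand inequality in \eqref{2.31}.

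The main obstacle is controlling the errors near the zeros $z_j$ of $w$. Since $\log w$ has logarithmic singularities at each $z_j$ and the equilibrium density from Theorem \ref{thm2.1} typically exhibits inverse-square-root blow-up at the endpoints of the support components, Fekete-type configurations cluster locally on scales finer than $1/n$, and the regularization $\log w\ast\phi_\epsilon$ deviates badly from $\log w$ whenever some $x_i$ lies within distance $\epsilon$ of a $z_j$. Balancing the smoothing scale $\epsilon$ against this non-uniform spacing, while simultaneously keeping the self-energy growth $n^{-1}\log(1/\epsilon)$ under control, is precisely what forces an extra logarithmic factor into the rate, producing $n\log^2 n$ rather than $n\log n$. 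The constants $d=d(w)>1$ and $D=D(w)>0$ end up absorbing the dependence on $K$, the exponents $p_i$, and the minimal separation of the zeros $\{z_i\}$.
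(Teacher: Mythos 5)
Your lower bound is correct and is in substance the same argument the paper invokes by citing Theorem III.1.1 of \cite{ST}: the product identity you write down is the standard Fekete monotonicity identity (the exponent count for both the differences and the weight factors checks out), it gives $M_{n+1}^{n-1}\le M_n^{n+1}$, hence that $d_n^w$ decreases, and together with \eqref{2.30} this yields the left inequality in \eqref{2.31}. For the upper bound you take a genuinely different route from the paper. The paper works with the weighted Fekete points themselves and imports quantitative potential and discrepancy estimates of G\"otz and Saff (after modifying $w$ near its zeros to make $\log w$ H\"older continuous without changing $\mu_w$ or the Fekete points), estimating $\prod_i w^{n-1}(\zeta_i^{(n)})|F_n'(\zeta_i^{(n)})|$ via a Cauchy-integral bound on $F_n'$ at scale $r=n^{-1/\lambda}$. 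Your plan is the classical regularization of the empirical measure, which bounds \emph{all} configurations at once rather than just the extremal one; done correctly it would in fact give an error $O(n\log n)$ rather than $O(n\log^2 n)$, since with disc-smoothing the only unavoidable loss is the $n$ self-energies, each of size $\log(1/\epsilon)$.

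There is, however, a concrete gap at the central step: you apply the minimality $I_w(\tilde\nu_n)\ge V_w$ from \eqref{2.3} to $\tilde\nu_n$, but that inequality is only available for measures in ${\mathcal M}([a,b])$, and $\tilde\nu_n$ is not supported on $[a,b]$. Worse, to get the pairwise inequality $\log\frac{1}{|x_i-x_j|}\ge\iint\log\frac{1}{|z-t|}\,d\mu_i\,d\mu_j$ you need the superharmonic mean--value inequality over \emph{discs}; a one-dimensional bump gives the opposite inequality by convexity of $-\log|x-t|$ on each side of $t$. So $\tilde\nu_n$ must live off the real line entirely, and the needed substitute for $I_w(\tilde\nu_n)\ge V_w$ is the expansion $I_w(\tilde\nu_n)-V_w=\iint\log\frac{1}{|z-t|}\,d(\tilde\nu_n-\mu_w)(z)\,d(\tilde\nu_n-\mu_w)(t)+2\int\bigl(U^{\mu_w}-\log w-F_w\bigr)\,d\tilde\nu_n$, where the quadratic term is nonnegative but the linear term must be bounded below using \eqref{2.4}, the H\"older continuity of $U^{\mu_w}$, and the explicit form of $\log w$ at distance $\le\epsilon$ from $[a,b]$. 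None of this appears in your sketch. Your error accounting is also misdirected: with disc-smoothing, your items (ii) and (iii) have a definite favorable sign (both $\log\frac{1}{|z-t|}$ and $-\log w=U^\nu-\log A$ are superharmonic), so close pairs and points near the $z_j$ cost nothing in the direction you need; clustering of Fekete configurations below scale $1/n$ is irrelevant since you are bounding the maximum over arbitrary configurations. The genuine obstacles are the off-support deficit just described and the term $n\int\log w\,d\tilde\nu_n$ that appears when converting $n^2$ to $n(n-1)$, which is $O(n\log n)$ for $\epsilon=n^{-A}$. The approach is salvageable, but the proof as proposed does not yet close.
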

Equation \eqref{2.31} is the only fact from potential theory
needed in the proof of Theorem \ref{thm1.1}. It is very likely
that $\log^2{n}$ can be replaced by $\log{n}$, matching the
classical case (see Theorem 1.3.3 in \cite{AB}). This would give a
corresponding improvement in the error term of \eqref{1.13}, but
we do not pursue this direction.

\section{Proofs}

\begin{proof}[Proof of Theorem \ref{thm1.1}]

The proof is based on an argument similar to the original
Gelfond-Schnirelman idea (cf. \cite{Nai} and \cite{Chu}). We
consider the integrals of small polynomials with integer
coefficients over the cube $[0,1]^n,\ n\in\N.$ It is important
that the integrals are non-zero, so that we work with the square
of the weighted Vandermonde determinant \eqref{1.10}, instead:
\begin{align} \label{5.1}
\Delta^w_n(x_1,\ldots,x_n) &:= \left( V^w_n \right)^2 =
\prod_{i=1}^n w^{2(n-1)}(x_i) \prod_{1\le i<j \le n} (x_i-x_j)^2.
\end{align}
If $w(x)\equiv 1$ then $\Delta^w_n$ is the classical discriminant.
In general, $\Delta^w_n$ is not a polynomial in $x_i$'s because of
the real exponents $\alpha_i$'s in the weight \eqref{1.12}. Hence
we modify it further into
\begin{align} \label{5.2}
\tilde\Delta^w_n(x_1,\ldots,x_n) &:= \prod_{j=1}^n \prod_{i=1}^k
\left(Q_{m_i}(x_j)\right)^{2\lceil\alpha_i(n-1)\rceil} \prod_{1\le
i<j \le n} (x_i-x_j)^2,
\end{align}
where $\lceil a\rceil$ denotes the ceiling function: the smallest
integer at least $a$. It is now clear that
$\tilde\Delta^w_n(x_1,\ldots,x_n)$ is a positive polynomial with
integer coefficients that has the following form
\begin{align} \label{5.3}
\tilde\Delta^w_n(x_1,\ldots,x_n) &= \sum a_{i_1\ldots i_n}
x_1^{i_1}\ldots x_n^{i_n}.
\end{align}
Recall the definition of the classical Vandermonde determinant
\begin{align*}
V_n := \left|
\begin{array}{cccc}
  1 & x_1 & \ldots & x_1^{n-1} \\
  1 & x_2 & \ldots & x_2^{n-1} \\
  \vdots & \vdots & \vdots & \vdots \\
  1 & x_n & \ldots & x_n^{n-1}
\end{array}
\right|  = \prod_{1\le i<j \le n} (x_i-x_j).
\end{align*}
Using standard expansion of $V_n$, we observe that each term of
this expansion is a (signed) product of all powers of $x_i$'s from
$0$ to $n-1$. The expression in \eqref{5.2} is equal to $V_n^2$
times the weight part. Thus if we arrange the powers
$i_1,\ldots,i_n$ in every term of \eqref{5.3} in the increasing
order, we have that
\begin{align} \label{5.4}
i_j\le n+j-2+N,\quad j=1,\ldots,n,
\end{align}
where $N:=2\sum_{i=1}^k m_i \lceil\alpha_i(n-1)\rceil$ is the
contribution of the weight part in \eqref{5.2}. Hence
\[
\int_0^1\ldots\int_0^1 \tilde\Delta^w_n(x_1,\ldots,x_n)\,
dx_1\ldots dx_n = \sum \frac{a_{i_1\ldots i_n}}{(i_1+1)\ldots
(i_n+1)} \neq 0
\]
is a rational number whose denominator divides
$\prod_{l=n+N}^{2n-1+N} \textup{lcm}(1,\ldots,l)$ by \eqref{5.4}.
It follows as in \eqref{1.5} that
\[
\prod_{l=n+N}^{2n-1+N} \textup{lcm}(1,\ldots,l) \int_{[0,1]^n}
\tilde\Delta^w_n \ge 1.
\]
On taking the logarithm, we obtain that
\[
\sum_{l=n+N}^{2n-1+N} \psi(l) \ge -\log \int_{[0,1]^n}
\tilde\Delta^w_n \ge -\log \max_{[0,1]^n} \tilde\Delta^w_n.
\]
It is clear from \eqref{5.1} and \eqref{5.2} that
\[
\tilde\Delta^w_n = \Delta^w_n \prod_{j=1}^n \prod_{i=1}^k
|Q_{m_i}(x_j)|^{2(\lceil\alpha_i(n-1)\rceil-\alpha_i(n-1))},
\]
which gives
\[
\max_{[0,1]^n} \tilde\Delta^w_n \le \max_{[0,1]^n} \Delta^w_n \
\prod_{i=1}^k \left(\max(1,\|Q_{m_i}\|_{[0,1]})\right)^{2n}.
\]
Since $\psi(x)$ is constant between integers, we arrive at the
estimate
\begin{align} \label{5.5}
\int_{n+N}^{2n+N} \psi(y)\, dy \ge -\log \max_{[0,1]^n} \Delta^w_n
+ O(n) \quad \mbox{as } n \to \infty.
\end{align}
We now need the following consequence of Lemma \ref{lem2.1}:
\[
\log \max_{[0,1]^n} \Delta^w_n = n^2 \log{c_w} + O(n\log^2{n})
\quad \mbox{as } n \to \infty.
\]
Applying this in \eqref{5.5}, we have
\[
\int_{n+N}^{2n+N} \psi(y)\, dy \ge - n^2 \log c_w + O(n\log^2 n)
\quad \mbox{as } n \to \infty.
\]
Note that $2\alpha (n-1) \le N \le 2\alpha (n-1) + 2\sum_{i=1}^k
m_i.$ If we set $2n(\alpha+1)=x$, then
\begin{align} \label{5.6}
\int_{\frac{2\alpha+1}{2\alpha+2}x}^x \psi(y)\, dy \ge -
\frac{\log c_w}{4(\alpha+1)^2}\, x^2 + O(x\log^2 x) \quad \mbox{as
} x \to \infty.
\end{align}
Using the substitution $x\to \frac{2\alpha+1}{2\alpha+2}\,x$
iteratively and summing up the results, we obtain
\begin{align*}
\int_{1}^x \psi(y)\, dy \ge  \frac{-\log{c_w}}{4\alpha+3} \, x^2 +
O(x\log^2{x}) \quad \mbox{as } x \to \infty.
\end{align*}

\end{proof}

\begin{proof}[Proof of Corollary \ref{cor1.2}]

Corollary \ref{cor2.4} gives the weighted equilibrium measure
$\mu_w$ for such weights in \eqref{2.14}-\eqref{2.16}. Hence we
have by \eqref{2.5} that the numerical value of $c_w$ can be
computed from
\[
-\log c_w = U^{\mu_{w}}(a_1)-\log w(a_1)-\int\log w \,d\mu_w,
\]
where $a_1$ is defined in \eqref{2.15}. The same equation yields
\eqref{1.11}, as a consequence of Theorem \ref{thm1.1}.

\end{proof}

\smallskip

We now start preparations for the proof of Theorem \ref{thm2.1}.
Recall the function $R(z)=\prod_{l=1}^L (z-a_l)(z-b_l)$, where
$a_1<b_1<a_2<b_2<\ldots<a_L<b_L$ are real numbers. The branch of
$\sqrt{R(z)}$, satisfying $\lim_{z\to\infty} \sqrt{R(z)}/z^L = 1$,
is analytic in $\C\setminus \bigcup_{l=1}^L [a_l,b_l]$. For
further reference, we describe the values of $\sqrt{R(z)}$ on the
real line:
\begin{equation} \label{5.7}
\sqrt{R(x)} = \left\{
\begin{array}{ll}
\sqrt{|R(x)|}, \quad &x\ge b_L, \\
(-1)^{L+l}\, i\, \sqrt{|R(x)|}, \quad &a_l\le x \le b_l,\ l=1,\ldots,L, \\
(-1)^{L+l}\, \sqrt{|R(x)|}, \quad &b_l\le x \le a_{l+1},\ l=1,\ldots,L-1, \\
(-1)^L\, \sqrt{|R(x)|}, \quad &x \le a_1.
\end{array}
\right.
\end{equation}
Here and throughout, the values of $\sqrt{R(x)}$ for $x \in
\bigcup_{l=1}^L [a_l,b_l]$ are understood as the upper limiting
values of $\sqrt{R(z)}$, when $\Im{z}\to 0^+.$

\begin{lemma} \label{lem3.1}
Let $S:=\bigcup_{l=1}^L\, [a_l,b_l]$. For any
$T_{L-1}\in\R_{L-1}[x]$, we have
\begin{equation} \label{5.8}
\frac{1}{\pi i} \int_S \frac{T_{L-1}(t)\,dt}{(t-z) \sqrt{R(t)}} =
\left\{
\begin{array}{ll}
0, \quad &z\in S, \\
T_{L-1}(z)/\sqrt{R(z)}, \quad &z \in \C\setminus S.
\end{array}
\right.
\end{equation}
\end{lemma}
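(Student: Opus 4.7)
The plan is to derive both cases from the residue theorem applied to
$$g(w):=\frac{T_{L-1}(w)}{(w-z)\sqrt{R(w)}},$$
viewed as a single-valued meromorphic function on $\C\setminus S$, together with the Sokhotski--Plemelj jump relation to pass from $z\notin S$ to $z\in S$.

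First I would record the behavior at infinity: since $\sqrt{R(w)}/w^L\to 1$ as $w\to\infty$ and $\deg T_{L-1}\le L-1$, we have $g(w)=O(|w|^{-L-1})$, so $\oint_{|w|=R}g(w)\,dw\to 0$ as $R\to\infty$. For $z\in\C\setminus S$, I would apply Cauchy's theorem in the region bounded by a large circle $|w|=R$, a small clockwise circle around $z$, and a clockwise loop $\gamma_S$ hugging $S$. Sending $R\to\infty$ and contracting the small circle around $z$ yields
$$\oint_{\gamma_S}g(w)\,dw \;=\; -2\pi i\,\mathrm{Res}_{w=z}g \;=\; -2\pi i\,\frac{T_{L-1}(z)}{\sqrt{R(z)}},$$
where $\gamma_S$ is now oriented counterclockwise around $S$.

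Next, I would collapse $\gamma_S$ onto $S$ interval by interval. On each $[a_l,b_l]$, the counterclockwise loop traces the top from $b_l$ to $a_l$ and the bottom from $a_l$ to $b_l$. Using (5.7) to identify the upper limiting value $\sqrt{R(t)}^+=(-1)^{L+l}i\sqrt{|R(t)|}$ (which is precisely the convention adopted for $\sqrt{R(t)}$ on $S$ in the lemma) and the corresponding lower value $\sqrt{R(t)}^-=-\sqrt{R(t)}^+$, each interval contributes
$$\int_{b_l}^{a_l}\frac{T_{L-1}(t)\,dt}{(t-z)\sqrt{R(t)}^+}+\int_{a_l}^{b_l}\frac{T_{L-1}(t)\,dt}{(t-z)\sqrt{R(t)}^-} \;=\; -2\int_{a_l}^{b_l}\frac{T_{L-1}(t)\,dt}{(t-z)\sqrt{R(t)}}.$$
Summing over $l$ and combining with the previous identity gives the $z\in\C\setminus S$ case of (5.8) after dividing by $\pi i$.

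For $z\in S$, say $z=z_0\in(a_{l_0},b_{l_0})$, I would obtain the vanishing of the principal value by letting $z=z_0+i\varepsilon$ with $\varepsilon\to 0^+$ in the already-proved formula. On the right-hand side, continuity gives $T_{L-1}(z_0)/\sqrt{R(z_0)^+}$. On the left-hand side, the Sokhotski--Plemelj formula applied to the Cauchy-type integral with density $T_{L-1}(t)/\sqrt{R(t)}$ yields the same boundary value $T_{L-1}(z_0)/\sqrt{R(z_0)^+}$ plus $(\pi i)^{-1}$ times the principal value integral; equating the two forces the principal value to be zero. The only real obstacle is bookkeeping the signs in the contour-collapse step, which is why I would anchor the computation explicitly on the four-case description of $\sqrt{R(x)}$ in (5.7); once that convention is respected, the rest is routine.
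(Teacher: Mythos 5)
Your proof is correct and follows essentially the same route as the paper's: Cauchy's theorem applied to $T_{L-1}(w)/\bigl((w-z)\sqrt{R(w)}\bigr)$ on a contour hugging $S$, collapsed onto the cut using the sign reversal of $\sqrt{R}$ across each $[a_l,b_l]$ recorded in \eqref{5.7}, with the $z\in S$ case recovered (somewhat more carefully than in the paper, which asserts it directly from the Cauchy formula) via the Sokhotski--Plemelj jump relation. One cosmetic slip: the decay at infinity is $O(|w|^{-2})$ rather than $O(|w|^{-L-1})$ when $\deg T_{L-1}=L-1$, but that estimate already suffices to make the integral over the large circle vanish.
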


\begin{proof}

A detailed proof of this known fact may be found in \cite{Mus}
(cf. Chapter 11). We give a sketch of argument based on Cauchy
integral formula. Consider a contour $\Gamma$ that consists of $L$
simple closed curves around each of the intervals $[a_l,b_l]$,
located close to those intervals. Then
\begin{equation*}
\frac{1}{2\pi i} \oint_{\Gamma} \frac{T_{L-1}(t)\,dt}{(t-z)
\sqrt{R(t)}} = \left\{
\begin{array}{ll}
0, \quad &z\in S, \\
T_{L-1}(z)/\sqrt{R(z)}, \quad &z \in \textup{Ext}(\Gamma).
\end{array}
\right.
\end{equation*}
Note that the limiting values of $\sqrt{R(t)}$ on $S$, from above
and below, are opposite in sign. Letting the contour $\Gamma$
shrink to $S$, we obtain that
\[
\lim_{\Gamma\to S} \frac{1}{2\pi i} \oint_{\Gamma}
\frac{T_{L-1}(t)\,dt}{(t-z) \sqrt{R(t)}} = \frac{1}{\pi i} \int_S
\frac{T_{L-1}(t)\,dt}{(t-z) \sqrt{R(t)}}.
\]

\end{proof}

Let $\omega(z,\cdot,\Omega)$ be the harmonic measure at $z \in
\Omega$ with respect to a domain $\Omega\subset\overline\C$, which
is a positive unit Borel measure supported on $\partial\Omega$.
The background information on harmonic measures may be found in
Section 4.3 of \cite{Ran} and Appendix A.3 of \cite{ST}. In
particular, $\omega(z,\cdot,\Omega)$ is equal to the balayage of
the unit point mass $\delta_z$ to $\partial\Omega.$ We give the
following explicit representations for these measures.

\begin{lemma} \label{lem3.2}
Let $\Omega:=\overline{\C}\setminus\bigcup_{l=1}^L\, [a_l,b_l]$.
There exist polynomials $T_{\infty}\in\R_{L-1}[x]$ and
$T_{z_j}\in\R_{L-1}[x],\ j=1,\ldots,K$, such that
\begin{equation} \label{5.9}
d\,\omega(\infty,x,\Omega)=\frac{T_{\infty}(x)\,dx}{\pi i
\sqrt{R(x)}}, \qquad x \in \bigcup_{l=1}^L\, [a_l,b_l],
\end{equation}
and
\begin{equation} \label{5.10}
d\,\omega(z_j,x,\Omega)=\frac{T_{z_j}(x)\,dx}{\pi i (x-z_j)
\sqrt{R(x)}}, \qquad x \in \bigcup_{l=1}^L\, [a_l,b_l],
\end{equation}
where $z_j\in\Omega\setminus\{\infty\},\ j=1,\ldots,K.$
\end{lemma}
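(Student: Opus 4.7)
The plan is to characterize each harmonic measure via its Cauchy transform, using the balayage description of harmonic measure (\cite[Theorem II.4.4 and Appendix A.3]{ST}), the Plemelj--Sokhotski jump formulas on $S:=\bigcup_{l=1}^L[a_l,b_l]$, and Lemma \ref{lem3.1}. For a measure $\mu$ on $S$ write $C_\mu(z):=\int_S d\mu(t)/(t-z)$; then in the interior of $S$ one has $(U^\mu)'(x)=\textup{PV}\,C_\mu(x)=(C_\mu^+(x)+C_\mu^-(x))/2$, while the density of $\mu$ (if it has one) equals $(C_\mu^+(x)-C_\mu^-(x))/(2\pi i)$. From \eqref{5.7} the chosen branch of $\sqrt{R}$ satisfies $\sqrt{R(x)}^+=-\sqrt{R(x)}^-$ on each $[a_l,b_l]$.

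First I would handle $z_0=\infty$. Since $\omega(\infty,\cdot,\Omega)$ is the equilibrium measure of $S$, its potential $U^{\omega}$ is constant on $S$, forcing $C_\omega^++C_\omega^-=0$ there. Combined with the branch jump of $\sqrt{R}$, the product $C_\omega(z)\sqrt{R(z)}$ has no jump across $S$, hence extends to an entire function. Expansion at infinity, using $C_\omega(z)=-1/z+O(1/z^2)$ (unit mass) and $\sqrt{R(z)}=z^L+O(z^{L-1})$, gives $C_\omega(z)\sqrt{R(z)}=-z^{L-1}+O(z^{L-2})$, so this entire function is a polynomial $T_\infty\in\R_{L-1}[x]$. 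Extracting the density from the jump of $C_\omega$ across $S$ then produces \eqref{5.9}.

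Next, for $z_0=z_j\in\Omega\setminus\{\infty\}$, balayage gives $U^{\omega(z_j,\cdot,\Omega)}(x)=-\log|x-z_j|$ q.e.\ on $S$, so $C_{\omega_j}^++C_{\omega_j}^-=-2/(x-z_j)$ on $S$. Setting $G(z):=C_{\omega_j}(z)+1/(z-z_j)$ reduces matters to the previous case: $G^++G^-=0$ on $S$, so $G(z)\sqrt{R(z)}$ extends across $S$ to a function meromorphic on $\C$ whose only pole is the simple one at $z_j$ inherited from $1/(z-z_j)$. The leading $1/z$ terms of $C_{\omega_j}$ and $1/(z-z_j)$ cancel, giving $G(z)=O(1/z^2)$, so $(z-z_j)G(z)\sqrt{R(z)}$ is entire of polynomial growth at most $z^{L-1}$ and is therefore a polynomial $T_{z_j}\in\R_{L-1}[x]$. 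Reading off the jump of $C_{\omega_j}$ across $S$ (to which $1/(z-z_j)$ contributes nothing, since $z_j\notin S$) yields \eqref{5.10}.

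The hard part will be the bookkeeping with branches in \eqref{5.7} and verifying positivity of the resulting density on each $[a_l,b_l]$. Positivity forces $T_\infty$ (and $T_{z_j}$) to alternate sign according to the pattern $(-1)^{L+l+1}$ on $[a_l,b_l]$; I would not prove this directly, but rather inherit it from the fact that $\omega(z_0,\cdot,\Omega)$ is a genuine positive probability measure, combined with the uniqueness of the Cauchy-transform representation derived above. A secondary technical point is the removability of apparent singularities of $G(z)\sqrt{R(z)}$ at the endpoints $a_l,b_l$: since $\omega(z_j,\cdot,\Omega)$ is a finite Borel measure, both $C_{\omega_j}$ and $G$ grow at most like $|z-a_l|^{-1/2}$ at each endpoint, which is precisely absorbed by the square-root zero of $\sqrt{R(z)}$ there.
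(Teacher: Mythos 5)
Your argument is correct in outline, but it takes a genuinely different route from the paper. You start from the harmonic measure and derive its density by analyzing the boundary behavior of its Cauchy transform: the jump relations together with $\bigl(\sqrt{R}\bigr)^+=-\bigl(\sqrt{R}\bigr)^-$ force $C_{\omega}(z)\sqrt{R(z)}$ (respectively $(z-z_j)\bigl(C_{\omega_j}(z)+1/(z-z_j)\bigr)\sqrt{R(z)}$) to continue to an entire function of polynomial growth, hence a polynomial of degree at most $L-1$, and the density is read off from the jump. The paper argues in the opposite direction: it \emph{constructs} $T_{\infty}$ as the unique solution of an explicit linear system ($L-1$ orthogonality conditions over the gaps $[b_l,a_{l+1}]$ plus a normalization), proves unique solvability and positivity of the resulting density by a sign-alternation argument based on \eqref{5.7}, then uses Lemma~\ref{lem3.1} and the fundamental theorem of calculus to check that the potential of the candidate measure is constant on $S$, and concludes by Frostman's theorem and uniqueness of the equilibrium measure; the case of finite $z_j$ is reduced to the case of $\infty$ by conformal invariance of harmonic measure under $z\mapsto 1/(z-z_j)$. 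Your route is essentially that of Lemma 4.4.1 of \cite{StT}, which the paper cites as the source of these formulas; it yields the degree bound and the leading asymptotics at $\infty$ almost for free, while the paper's construct-and-verify route buys freedom from regularity hypotheses: it never needs to know a priori that $\omega(z_0,\cdot,\Omega)$ is absolutely continuous or that the boundary values $C^{\pm}_{\omega}$ exist pointwise.

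Two steps in your write-up need shoring up on exactly this regularity issue. First, writing $(U^{\mu})'=\tfrac12(C_{\mu}^{+}+C_{\mu}^{-})$ and extracting the density from $C_{\mu}^{+}-C_{\mu}^{-}$ presupposes the very smoothness of $\omega(z_0,\cdot,\Omega)$ you are in the course of establishing; either quote the known absolute continuity of harmonic measure on a finite union of nondegenerate intervals, or replace the Plemelj computation by a reflection/Morera argument for $C_{\omega}\sqrt{R}$, which uses only the constancy of $U^{\omega}$ on $S$ (guaranteed q.e., hence everywhere by regularity of $S$). Second, the endpoint bound $|C_{\omega_j}(z)|=O(|z-a_l|^{-1/2})$ does \emph{not} follow from $\omega_j$ being a finite Borel measure: a point mass at $a_l$ gives growth of order $|z-a_l|^{-1}$. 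The clean fix is that $U^{\omega(z_0,\cdot,\Omega)}$ is continuous, hence bounded near $a_l$, so the derivative of its local analytic completion satisfies $|C_{\omega}(z)|=O\bigl(\mathrm{dist}(z,S)^{-1}\bigr)$; multiplying by $|\sqrt{R(z)}|=O(|z-a_l|^{1/2})$ shows the isolated singularity of the continued function at $a_l$ is removable. With these repairs, and with positivity inherited (as you say) from the fact that the left-hand sides of \eqref{5.9}--\eqref{5.10} are positive measures, your proof is complete.
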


\begin{proof}

These formulas are essentially known (see, e.g., Lemma 4.4.1 of
\cite{StT} and Lemma 2.3 of \cite{Tot}). It is possible to deduce
\eqref{5.9}-\eqref{5.10} from Lemma \ref{lem3.1}, which is done
below.

We select $T_{\infty}(t)=\sum_{j=1}^{L-1} c_j t^j \in \R_{L-1}[t]$
so that it satisfies the following equations:
\begin{equation} \label{5.11}
\int_{b_l}^{a_{l+1}} \frac{T_{\infty}(t)\,dt}{\sqrt{R(t)}} =
\sum_{j=1}^{L-1} c_j \int_{b_l}^{a_{l+1}}
\frac{t^j\,dt}{\sqrt{R(t)}} = 0, \quad l=1,\ldots,L-1,
\end{equation}
and
\begin{equation} \label{5.12}
\frac{1}{\pi i} \int_S \frac{T_{\infty}(t)\,dt}{\sqrt{R(t)}} =
\sum_{j=1}^{L-1} \frac{c_j}{\pi i} \int_S
\frac{t^j\,dt}{\sqrt{R(t)}} = 1,
\end{equation}
where $S=\bigcup_{l=1}^L\, [a_l,b_l].$ The polynomial
$T_{\infty}(t)$ is defined by these equations uniquely, because
the corresponding homogeneous system of linear equations (with
zero on the right of \eqref{5.12}), in the coefficients $c_j$ of
$T_{\infty}(t)$, has only trivial solution. Indeed, let $T_h(t)$
be a nontrivial solution of this homogeneous system. Since the
sign of $\sqrt{R(t)}$ is constant on each $(b_l,a_{l+1})$, by
\eqref{5.7}, $T_h(t)$ must change sign on each $[b_l,a_{l+1}],\
l=1,\ldots,L-1,$ by \eqref{5.11}. Hence $T_h(t)$ has a simple zero
in each $(b_l,a_{l+1}),\ l=1,\ldots,L-1,$ and it alternates sign
on the intervals $[a_l,b_l],\ l=1,\ldots,L.$ (Note that the same
is true for $T_{\infty}(t).$) It follows from \eqref{5.7} that
$T_h(t)/(\pi i \sqrt{R(t)})$ doesn't change sign on $S,$
contradicting
\[
\frac{1}{\pi i} \int_S \frac{T_h(t)\,dt}{\sqrt{R(t)}} = 0.
\]
Thus $T_{\infty}(t)$ exists and is unique. In addition, the above
argument and \eqref{5.12} show that $T_{\infty}(t)/(\pi i
\sqrt{R(t)})$ keeps positive sign on $S,$ i.e., \eqref{5.9}
actually defines a positive unit Borel measure on $S.$ Let
\[
h(x):=\frac{1}{\pi i} \int_S \frac{T_{\infty}(t)\,dt}{(t-x)
\sqrt{R(t)}}, \qquad x\in\R.
\]
We have from \eqref{5.8} that $h\in L_p([a_1,b_L]),\ 1\le p<2,$
and $h\in C^{\infty}\left( [a_1,b_L]\setminus\{a_l,b_l\}_{l=1}^L
\right).$ Note that $h(x)$ is the derivative of potential for the
measure on the right of \eqref{5.9}. The fundamental theorem of
calculus gives that
\[
\frac{1}{\pi i} \int_S \log \frac{1}{|x-t|}
\frac{T_{\infty}(t)\,dt}{\sqrt{R(t)}} = \int_{a_1}^x h(t)\,dt + C,
\qquad x\in[a_1,b_L],
\]
where $C$ is a constant. Using \eqref{5.8} and \eqref{5.11}, we
obtain that
\[
\frac{1}{\pi i} \int_S \log \frac{1}{|x-t|}
\frac{T_{\infty}(t)\,dt}{\sqrt{R(t)}} = C, \qquad x\in S.
\]
Frostman's theorem (see Theorem 3.3.4 of \cite{Ran}) and the
uniqueness of the equilibrium measure (cf. Theorem 3.7.6 of
\cite{Ran}) imply that $T_{\infty}(x)\,dx/(\pi i \sqrt{R(x)})$ is
the classical (not weighted) equilibrium measure for $S$. The
latter is known to be equal to $\omega(\infty,x,\Omega)$ (see
Theorem 4.3.14 of \cite{Ran}), which proves \eqref{5.9}. Equations
\eqref{5.10} follow from \eqref{5.9} by using the transformations
\[
f_j(z) := \frac{1}{z-z_j}, \qquad z\in\overline\C,\ j=1,\ldots,K,
\]
and the conformal invariance of harmonic measures (cf. Theorem
4.3.8 of \cite{Ran}). Indeed, if $\Omega_j:=f_j(\Omega)$ and
$S_j:=f_j(S)=\partial\Omega_j,$ then
\[
\omega(z_j,x,\Omega)=\omega(\infty,(x-z_j)^{-1},\Omega_j), \qquad
 j=1,\ldots,K.
\]
We can use \eqref{5.9} on the right side of the above equation,
because $S_j$ is also a union of $L$ segments of the real line,
which gives \eqref{5.10}.

\end{proof}

In \cite{Pri}, we showed that the weighted energy problem
discussed in Section 2 can be solved in terms of a linear
combination of harmonic measures.

\begin{lemma} \label{lem3.3}
Let $w(x)$ be as in \eqref{2.1}, $x\in [a,b]$. Then $S_w \subset
[a,b] \setminus \{z_j\}_{j=1}^K,$ and the extremal measure for the
weighted energy problem \eqref{2.2}-\eqref{2.3} is given by
\begin{equation} \label{5.13}
\mu_w = (1+p)\omega(\infty,\cdot,\Omega) - \sum_{j=1}^K p_j
\omega(z_j,\cdot,\Omega),
\end{equation}
where $\Omega:=\overline{\C} \setminus S_w$ and $p=\sum_{j=1}^K
p_j.$
\end{lemma}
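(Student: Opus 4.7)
The plan is to identify the right-hand side $\nu$ of \eqref{5.13} as the equilibrium measure $\mu_w$ by checking that $\nu$ has total mass $1$, is supported on $S_w$, and satisfies the Frostman identity $U^{\nu}(x)-\log w(x)=\textup{const}$ q.e.\ on $S_w$; the uniqueness part of Theorem I.1.3 of \cite{ST} then forces $\nu=\mu_w$. The two main ingredients are the balayage characterization of harmonic measures (cf.\ Appendix A.3 of \cite{ST}) and a maximum-principle argument for logarithmic potentials of signed measures.

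The first step is to establish that $S_w\cap\{z_j\}_{j=1}^K=\emptyset$, which guarantees $z_j\in\Omega$ and hence $\omega(z_j,\cdot,\Omega)$ is well-defined. If instead $z_j\in S_w$, then \eqref{2.5} combined with $\log w(z_j)=-\infty$ would force $U^{\mu_w}(z_j)=-\infty$, and the resulting logarithmic singularity of $\mu_w$ at $z_j$ would make the term $-2\int\log w\,d\mu_w$ diverge to $+\infty$, contradicting the finiteness of $I_w(\mu_w)=V_w$.

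Next, $U^{\nu}$ is computed on $S_w$ via the defining balayage property: for q.e.\ $x\in\partial\Omega=S_w$,
\[
U^{\omega(\infty,\cdot,\Omega)}(x)=-\log\textup{cap}(S_w),\qquad U^{\omega(z_j,\cdot,\Omega)}(x)+\log|x-z_j|=g_\Omega(z_j,\infty).
\]
Since $\log w(x)=\log A+\sum_{j=1}^K p_j\log|x-z_j|$, substitution yields
\[
U^{\nu}(x)-\log w(x)=-(1+p)\log\textup{cap}(S_w)-\log A-\sum_{j=1}^K p_j\,g_\Omega(z_j,\infty),
\]
a constant q.e.\ on $S_w$. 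Moreover, $\nu$ is supported in $S_w$ and has total mass $(1+p)-\sum_j p_j=1$.

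To conclude, set $\sigma:=\mu_w-\nu$, a finite signed Borel measure supported in $S_w$ with $\sigma([a,b])=0$. By the previous step and \eqref{2.5}, the potential $U^\sigma$ is constant q.e.\ on $\partial\Omega$, harmonic on $\Omega$, and satisfies $U^\sigma(z)=O(|z|^{-1})$ as $z\to\infty$ because of the vanishing total mass. The maximum principle applied to $U^\sigma$ as a bounded harmonic function on $\Omega$ with constant q.e.\ boundary values vanishing at $\infty$ forces $U^\sigma\equiv 0$, whence $\sigma=0$ and $\nu=\mu_w$; in particular, $\nu$ is positive. The most delicate point will be Step~1: upgrading the pointwise divergence $\log w(z_j)=-\infty$ to a rigorous exclusion of $z_j$ from $S_w$, since \eqref{2.4}--\eqref{2.5} hold only quasi-everywhere and one must combine the finiteness of $V_w$ with the local behavior of $-\log w$ near each $z_j$.
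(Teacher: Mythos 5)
Your overall strategy is the same as the paper's: compute the potential of the candidate measure on $S_w$ via balayage identities for harmonic measures, observe that $U^{\nu}-\log w$ is constant q.e.\ on $S_w$, and conclude by uniqueness. Your Green-function formulas $U^{\omega(\infty,\cdot,\Omega)}=-\log\textup{cap}(S_w)$ and $U^{\omega(z_j,\cdot,\Omega)}(x)+\log|x-z_j|=g_{\Omega}(z_j,\infty)$ q.e.\ on $S_w$ are exactly the potential identities the paper extracts from the balayage $\hat\nu=\sum_j p_j\,\omega(z_j,\cdot,\Omega)$ of $\sum_j p_j\delta_{z_j}$, so the central computation is correct and equivalent.

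Two steps, however, do not work as written. First, your exclusion of $z_j$ from $S_w$: \eqref{2.4}--\eqref{2.5} are only quasi-everywhere statements and a single point is polar, so they say nothing at $x=z_j$; moreover $z_j\in S_w$ does not by itself create a ``logarithmic singularity of $\mu_w$ at $z_j$.'' The correct (and intended) route is simply that Theorem I.1.3 of \cite{ST} already asserts $S_w\subset\{x:w(x)>0\}$ for admissible weights, which is how the paper disposes of this point; what you flag as the most delicate step is in fact a citation. Second, the concluding maximum-principle argument is where the real technical content lies, and it is glossed over: $U^{\sigma}$ is not a priori bounded near $S_w$ (potentials of finite-energy measures need not be bounded above on their supports), the boundary values are only known q.e., and even after showing $U^{\sigma}\equiv0$ on $\Omega$ you still need $U^{\sigma}=0$ q.e.\ on $S_w$ itself to deduce $\sigma=0$ --- and the tool that delivers this for a \emph{signed} $\sigma$ is precisely a unicity theorem you have not invoked. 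The paper's finish avoids all of this by rearranging \eqref{5.13} into an identity between two \emph{positive} finite-energy measures of equal mass, $\mu_w+\sum_j p_j\,\omega(z_j,\cdot,\Omega)$ and $(1+p)\,\omega(\infty,\cdot,\Omega)$, whose potentials agree up to a constant q.e.\ on $S_w$; the Principle of Domination (Theorem II.3.2 of \cite{ST}) then upgrades the q.e.\ identity to all of $\C$, and the Unicity Theorem (Theorem I.3.3 of \cite{ST}) gives equality of the measures. This rearrangement also yields for free the positivity of the right-hand side of \eqref{5.13}, which in your signed-measure formulation appears only as an afterthought. I recommend you restructure the last step along these lines.
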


\begin{proof}
The existence of a weighted equilibrium measure $\mu_w$, whose
support is a compact set $S_w \subset [a,b] \setminus
\{z_j\}_{j=1}^K$, follows from Theorem I.1.3 of \cite{ST}. Let
$\delta_z$ be a unit point mass at $z$. Observe that
\begin{equation} \label{5.14}
\log w(z)=\log A - U^{\nu}(z), \qquad z\in\C,
\end{equation}
where $U^{\nu}$ is the logarithmic potential of the measure
\[
\nu:= \sum_{j=1}^K p_j \delta_{z_j}.
\]
It is clear that $\nu$ is a positive Borel measure of total mass
$\nu(\C)=p.$ Let $\hat\nu$ be the balayage of $\nu$ from $\Omega$
onto $S_w$ (see, e.g., Section II.4 of \cite{ST}). Then $\hat\nu$
is a positive Borel measure of the same mass as $\nu$, which is
supported on $S_w$. Furthermore, we can express $\hat\nu$ via
harmonic measures
\[
\hat\nu=\sum_{j=1}^K p_j \omega(z_j,\cdot,\Omega)
\] (cf. Appendix A.3 of \cite{ST}). The potentials of $\nu$ and $\hat\nu$ are
related by the equation
\begin{equation} \label{5.15}
U^{\hat\nu}(x)=U^{\nu}(x)+C_1, \qquad \mbox{for q.e. } x\in S_w,
\end{equation}
where $C_1$ is a constant. This equation  holds quasi everywhere
on $S_w$, i.e., with the exception of a set of zero logarithmic
capacity (see Theorem II.4.4 of \cite{ST}). Using \eqref{2.5},
(\ref{5.14}) and (\ref{5.15}), we obtain for quasi every $x\in
S_w$ that
\begin{eqnarray*}
F_w &=& U^{\mu_w}(x)-\log w(x) = U^{\mu_w}(x)+U^{\nu}(x)-\log A\\
&=& U^{\mu_w}(x)+U^{\hat\nu}(x)-C_1-\log A
\end{eqnarray*}
Recall that the potential $U^{\omega(\infty,\cdot,\Omega)}(x)$ is
constant q.e. on $S_w$, by Frostman's theorem. Thus we have
\begin{equation} \label{5.16}
U^{\mu_w+\hat\nu}(x) = U^{(1+p)\omega(\infty,\cdot,\Omega)}(x) +
C_2, \qquad \mbox{for q.e. }x\in S_w,
\end{equation}
where $C_2$ is another constant. Note that all measures in the
above equation have finite logarithmic energy. Since the mass of
$\mu_w+\hat\nu$ is equal to that of
$(1+p)\omega(\infty,\cdot,\Omega)$, we can apply the Principle of
Domination (cf. Theorem II.3.2 of \cite{ST}) to conclude that
\eqref{5.16} holds for every $x\in\C.$ The Unicity Theorem (see
Theorem I.3.3 of \cite{ST}) now shows that
\[
\mu_w + \hat\nu=(1+p)\omega(\infty,\cdot,\Omega).
\]

\end{proof}

\begin{proof}[Proof of Theorem \ref{thm2.1}]

It is clear from \eqref{2.5} that $S_w \subset [a,b]\setminus Z.$
Since $-\log w(x)$ is convex on each interval $(z_j,z_{j+1}),\
j=1,\ldots,K-1,$ the intersection of $S_w$ with $(z_j,z_{j+1})$ is
either a closed interval or an empty set (cf. Theorem IV.1.10(b)
of \cite{ST}). Hence $S_w=\bigcup_{l=1}^L [a_l,b_l], \ L\le K-1,$
where we have at most one interval $[a_l,b_l]$ between any
neighboring zeros $z_j$ and $z_{j+1}$ of $w$.

Theorem 1.38 of \cite{DKM} states that for the upper limiting
values of
\begin{align} \label{5.17}
F(z):= \frac{\sqrt{R(z)}}{\pi i} \int_{S_w} \frac{-i (\log
w(t))'\,dt}{\pi (t-z)\sqrt{R(t)}}, \qquad z\in\C\setminus S_w,
\end{align}
we have
\begin{align} \label{5.18}
d\mu_w(x)=\Re F(x)\,dx, \qquad x\in S_w.
\end{align}
We remark that the statement of this theorem in \cite{DKM}
requires $\log w(x)$ be real analytic in a neighborhood of
$[a,b].$ But this is only used to conclude that the support of
$\mu_w$ consists of finitely many intervals, which we already have
anyway. For their analysis leading to \eqref{5.17}-\eqref{5.18},
it is sufficient that $\log w \in C^2([a,b]).$ In fact,
\eqref{5.17}-\eqref{5.18} are obtained by solving the singular
integral equation with Cauchy kernel (by the methods similar to
those of \cite[Chap. 11]{Mus}):
\[
\int\frac{d\mu(t)}{t-x} = (\log w(x))', \qquad x \in S_w,
\]
which arises via differentiation of \eqref{2.5}. In order to
achieve that $\log w \in C^2([a,b]),$ we can modify $w$ in the
small neighborhoods of zeros, outside the compact set
\begin{align} \label{5.19}
S_w^* := \{x\in[a,b]: U^{\mu_{w}}(x)-\log w(x) \le F_{w}\}\
\subset [a,b]\setminus Z,
\end{align}
so that the new weight $\tilde w$ satisfies
\begin{align} \label{5.20}
U^{\mu_{w}}(x)-\log \tilde w(x) > F_{w}, \qquad x\in
[a,b]\setminus S_w^*.
\end{align}
Theorem I.3.3 of \cite{ST} and \eqref{5.19}-\eqref{5.20} then give
that the equilibrium measure $\mu_{\tilde w}=\mu_w$. Hence the
result \eqref{5.17}-\eqref{5.18} of Theorem 1.38 \cite{DKM} is
valid in our case.

With the help of Lemma \ref{lem3.1}, we obtain for
$z\in\C\setminus S_w$ that
\begin{align*}
F(z) &= \frac{\sqrt{R(z)}}{\pi i} \sum_{j=1}^K \frac{p_j}{\pi i}
\int_{S_w} \frac{dt}{(t-z_j)(t-z)\sqrt{R(t)}} \\ &=
\frac{\sqrt{R(z)}}{\pi i} \sum_{j=1}^K \frac{p_j}{\pi i (z-z_j)}
\int_{S_w} \left(\frac{1}{t-z}-\frac{1}{t-z_j}\right)
\frac{dt}{\sqrt{R(t)}} \\ &= \frac{\sqrt{R(z)}}{\pi i}
\sum_{j=1}^K \frac{p_j}{\pi i (z-z_j)} \left( \int_{S_w}
\frac{dt}{(t-z)\sqrt{R(t)}} - \int_{S_w}
\frac{dt}{(t-z_j)\sqrt{R(t)}} \right) \\ &= \frac{\sqrt{R(z)}}{\pi
i} \sum_{j=1}^K \left( \frac{p_j}{(z-z_j)\sqrt{R(z)}} -
\frac{p_j}{(z-z_j)\sqrt{R(z_j)}} \right) \\ &= \frac{1}{\pi i}
\sum_{j=1}^K \frac{p_j}{z-z_j} - \frac{\sqrt{R(z)}}{\pi i}
\sum_{j=1}^K \frac{p_j}{(z-z_j)\sqrt{R(z_j)}}.
\end{align*}
Recall that $\sqrt{R(x)}$ is pure imaginary for $x\in S_w$, and
$\sqrt{R(z_j)}$ is real, $j=1,\ldots,K,$ by \eqref{5.7}. Thus we
have that
\[
\Re F(x) = - \frac{\sqrt{R(x)}}{\pi i} \sum_{j=1}^K
\frac{p_j}{(x-z_j)\sqrt{R(z_j)}}, \qquad x\in S_w.
\]
If we set
\begin{align} \label{5.21}
P(x) := \frac{1}{1+p} \sum_{j=1}^K \frac{p_j}{\sqrt{R(z_j)}}
\prod_{l\neq j} (x-z_l),
\end{align}
then
\begin{align} \label{5.22}
d\mu_{w}(x)=-\frac{(1+p) \sqrt{R(x)}\,P(x)}{\pi i \prod_{j=1}^K
(x-z_j)}\, dx, \qquad x \in S_w,
\end{align}
by \eqref{5.18}. Equation \eqref{2.8} now follows from
\eqref{5.7}. Also, \eqref{5.21} and \eqref{5.7} give
\begin{equation} \label{5.23}
P(z_j) = \frac{p_j \prod_{m\neq j} (z_j-z_m)}{(1+p)\sqrt{R(z_j)}}
= (-1)^{L+l}\, \frac{p_j\prod_{m\neq j}
(z_j-z_m)}{(1+p)\sqrt{|R(z_j)|}},
\end{equation}
where $z_j\in [b_l,a_{l+1}],\ j=1,\ldots,K,$ which proves
\eqref{2.9}. On the other hand, Lemmas \ref{lem3.2} and
\ref{lem3.3} yield another representation for $\mu_w$, of the form
\[
d\mu_w(x) = \frac{T(x)\,dx}{\pi i \sqrt{R(x)}\prod_{j=1}^K
(x-z_j)}, \qquad x \in S_w,
\]
where $T\in\R_{K+L-1}[x].$ Comparing this with \eqref{5.22}, we
conclude that
\[
-(1+p) R(x) P(x) = T(x)
\]
and $2L+\deg(P) = \deg(T) \le K+L-1.$ Thus the actual degree of
$P$ satisfies
\[
\deg(P) \le K-L-1.
\]
We now prove that $P$ has leading coefficient $1,$ by using the
following identity:
\begin{align} \label{5.24}
\lim_{x\to\infty} (-x) \int_{S_w} \frac{d\mu_{w}(t)}{t-x} =
\mu_w(\C) = 1.
\end{align}
Observe that $\sqrt{R(t)}\,P(t)/\left((t-x)\prod_{j=1}^K
(t-z_j)\right)$ is an analytic function of $t$ in
$\overline\C\setminus S_w$, except for the simple poles at $x$ and
$z_j,\ j=1,\ldots,K.$ Applying Cauchy's integral theorem, we have
for a small $r>0$:
\begin{align*}
\int_{S_w} \frac{d\mu_{w}(t)}{t-x} &= -\frac{1+p}{\pi i}
\int_{S_w} \frac{\sqrt{R(t)}\,P(t)\,dt}{(t-x)\prod_{j=1}^K
(t-z_j)} = -\frac{1+p}{2 \pi i} \oint_{S_w}
\frac{\sqrt{R(t)}\,P(t)\,dt}{(t-x)\prod_{j=1}^K (t-z_j)} \\
&= -\frac{1+p}{2 \pi i} \left(\oint_{|t-x|=r} + \sum _{l=1}^K
\oint_{|t-z_l|=r} \right)
\frac{\sqrt{R(t)}\,P(t)\,dt}{(t-x)\prod_{j=1}^K (t-z_j)} \\
&= -(1+p) \frac{\sqrt{R(x)}\,P(x)}{\prod_{j=1}^K (x-z_j)} - (1+p)
\sum _{l=1}^K \frac{\sqrt{R(z_l)}\,P(z_l)}{(z_l-x)\prod_{j\neq l}
(z_l-z_j)}.
\end{align*}
Taking into account \eqref{5.23}, we obtain that
\begin{align} \label{5.25}
\int_{S_w} \frac{d\mu_{w}(t)}{t-x} = -(1+p)
\frac{\sqrt{R(x)}\,P(x)}{\prod_{j=1}^K (x-z_j)} - \sum _{l=1}^K
\frac{p_l}{z_l-x}, \quad x\in\C\setminus(S_w\cup Z).
\end{align}
If $c_{K-L-1}$ is the leading coefficient of $P$, then
\[
\lim_{x\to\infty} (-x) \int_{S_w} \frac{d\mu_{w}(t)}{t-x} =
(1+p)c_{K-L-1} - \sum_{l=1}^K p_l = 1,
\]
by \eqref{5.24}. Thus $(1+p)c_{K-L-1}=1+p$ and
\[
c_{K-L-1} = 1.
\]

It only remains to prove \eqref{2.10} now. We find from
\eqref{2.5} that
\begin{align} \label{5.26}
U^{\mu_w}(a_{l+1}) - U^{\mu_w}(b_l) = \log w(a_{l+1}) - \log
w(b_l), \quad l=1,\ldots,L-1.
\end{align}
The potential $U^{\mu_w}(x)$ is continuous in $\C$ by Theorem
I.4.8 of \cite{ST}, and it is infinitely differentiable on
$(b_l,a_{l+1})$. Hence we obtain by the fundamental theorem of
calculus and \eqref{5.25} that
\begin{align*}
U^{\mu_w}(a_{l+1}) - U^{\mu_w}(b_l) &= \int_{b_l}^{a_{l+1}}
\frac{d}{dx}\left(U^{\mu_w}(x)\right)\, dx = \int_{b_l}^{a_{l+1}}
\int_{S_w} \frac{d\mu_{w}(t)}{t-x}\, dx \\ &= \int_{b_l}^{a_{l+1}}
\left(-(1+p) \frac{\sqrt{R(x)}\,P(x)}{\prod_{j=1}^K (x-z_j)} +
\sum_{j=1}^K \frac{p_j}{x-z_j}\right)\, dx \\
&= -(1+p) \int_{b_l}^{a_{l+1}}
\frac{\sqrt{R(x)}\,P(x)\,dx}{\prod_{j=1}^K (x-z_j)} \\ &+
\sum_{j=1}^K p_j\left(\log|a_{l+1}-z_j| - \log|b_l-z_j| \right),
\end{align*}
where the last equality holds in the principal value sense.
Therefore, we have by \eqref{5.26} that
\[
-(1+p) \int_{b_l}^{a_{l+1}}
\frac{\sqrt{R(x)}\,P(x)\,dx}{\prod_{j=1}^K (x-z_j)} +
\log\frac{w(a_{l+1})}{w(b_l)} = \log\frac{w(a_{l+1})}{w(b_l)},
\]
which proves \eqref{2.10}.

\end{proof}

\begin{proof}[Proof of Corollary \ref{cor2.2}]

If $L=K-1$  then obviously $P(x)\equiv 1$ and $z_j\in
(b_{j-1},a_j),\ j=1,\ldots,K.$ Hence
\[
\prod_{j=1}^K (x-z_j)=(-1)^{L+1-l} \prod_{j=1}^K |x-z_j|, \qquad
x\in [a_l,b_l],
\]
and \eqref{2.11} follows from \eqref{2.8}. Also, we have
\[
\prod_{m\neq j} (z_j-z_m) = (-1)^{L+1-j} \prod_{m\neq j}
|z_j-z_m|,
\]
which gives \eqref{2.12} by \eqref{2.9}. Equation \eqref{2.13} is
an immediate consequence of \eqref{2.10}.

\end{proof}

\begin{proof}[Proof of Remark \ref{rem2.3}]

Consider the weight $w(x)=(1+x)^{p_1} |x|^{p_2} (1-x)^{p_3}$ on
$[-1,1]$, where we take $p_2=p_3=1$. We show that the support
$S_w$, for large $p_1$, consists of only one interval. Assume to
the contrary that $S_w=[a_1,b_1]\cup [a_2,b_2].$ Then Corollary
\ref{cor2.2} applies here, and \eqref{2.12}-\eqref{2.13} hold
true. Note that these equations are identical to the equations of
Amoroso \cite[p. 1184]{Amo}, used to define the endpoints
$a_1,b_1,a_2,b_2$. As it turns out, they do not have a solution
for large $p_1$. Indeed, \eqref{2.13} gives
\begin{align} \label{5.27}
\int_{b_1}^{a_2}
\frac{\sqrt{(x-a_1)(x-b_1)(a_2-x)(b_2-x)}}{x(x^2-1)}\,dx = 0,
\end{align}
and \eqref{2.12} gives for $z_1=-1$ that
\[
\sqrt{(1+a_1)(1+b_1)(1+a_2)(1+b_2)} = \frac{2 p_1}{p_1+3}.
\]
Since $-1<a_1<b_1<0<a_2<b_2<1,$ we have from the latter equation
that
\[
\sqrt{(1+a_2)(1+b_2)} > \frac{2 p_1}{p_1+3},
\]
and
\[
\lim_{p_1\to+\infty} a_2 = \lim_{p_1\to+\infty} b_2 = 1.
\]
Similarly,
\[
2\sqrt{(1+a_1)(1+b_1)} > \frac{2 p_1}{p_1+3}
\]
implies that
\[
\lim_{p_1\to+\infty} a_1 = \lim_{p_1\to+\infty} b_1 = 0.
\]
Therefore $\sqrt{(x-a_1)(x-b_1)(a_2-x)(b_2-x)}$ is strictly
increasing on $[b_1,-b_1]$, for all sufficiently large $p_1$, and
\[
\int_{b_1}^{-b_1}
\frac{\sqrt{(x-a_1)(x-b_1)(a_2-x)(b_2-x)}}{x(x^2-1)}\,dx < 0.
\]
Coupling this with the obvious inequality
\[
\int_{-b_1}^{a_2}
\frac{\sqrt{(x-a_1)(x-b_1)(a_2-x)(b_2-x)}}{x(x^2-1)}\,dx < 0,
\]
we obtain a contradiction to \eqref{5.27}.

It is possible to show that the number of intervals of the support
for $\mu_w$ in Theorem \ref{thm2.1} can indeed take any value
between $1$ and $K-1.$

\end{proof}

\begin{proof}[Proof of Corollary \ref{cor2.4}]

This result follows from Corollary \ref{2.2}. The representation
\eqref{2.14} for $\mu_w$  is an immediate consequence of
\eqref{2.11}. We also obtain from \eqref{2.12} that the endpoints
of the support must satisfy the equations
\[
a_1 b_1=r_1^2 \quad \mbox{and} \quad (1-a_1)(1-b_1)=r_2^2.
\]
Solving those equations, we obtain \eqref{2.15} and \eqref{2.16}.

\end{proof}

\begin{proof}[Proof of Lemma \ref{lem2.1}]

Consider the weighted Fekete points $\{\zeta_i^{(n)}\}_{i=1}^n
\subset [a,b], \ n\in\N,$ that maximize the absolute value of the
weighted Vandermonde determinant \eqref{1.10}. The relation
between the problems of minimizing energy \eqref{2.2}-\eqref{2.3}
and maximizing \eqref{1.10} becomes transparent if we consider
$-\frac{2}{n(n-1)}\log|V_n^w|$, which is essentially a discrete
version of the weighted energy functional \eqref{2.2}. Indeed, the
normalized counting measures
\[ \nu_n := \frac{1}{n} \sum_{i=1}^n \delta_{\zeta_i^{(n)}} \]
converge weakly to $\mu_w$, the extremal measure for
\eqref{2.2}-\eqref{2.3}, and \eqref{2.30} holds true (cf. Section
III.1 of \cite{ST}). Thus the discrete problem is a good
approximation of the continuous one.

We deduce \eqref{2.31} from the results of G\"otz and Saff
\cite{GSa}. They require that $\log w(x)$ be H\"older continuous
on $[a,b]$, which is not true if $w$ of \eqref{2.1} has zeros on
$[a,b]$. But we can modify $w$ in the small neighborhoods of those
zeros, outside the compact set
\begin{align} \label{4.1}
S_w^* = \{x\in[a,b]: U^{\mu_{w}}(x)-\log w(x) \le F_{w}\}\ \subset
[a,b]\setminus Z,
\end{align}
so that the new weight $\tilde w$ satisfies
\begin{align} \label{4.2}
U^{\mu_{w}}(x)-\log \tilde w(x) > F_{w}, \qquad x\in
[a,b]\setminus S_w^*,
\end{align}
and $\log\tilde w(x)$ is H\"older continuous on $[a,b].$ It
follows from Theorem I.3.3 of \cite{ST} and
\eqref{4.1}-\eqref{4.2} that the equilibrium measure $\mu_{\tilde
w} =\mu_w$, and from Theorem III.1.2 of \cite{ST} and \eqref{4.1}
that the weighted Fekete points for $\tilde w$ are identical to
those for $w$. Thus all results of \cite{GSa} are applicable here.
Set
\[
F_n(x) := \prod_{i=1}^n (x-\zeta_i^{(n)}), \qquad n\in\N.
\]
Then $\frac{1}{n}\log|F_n(x)|=-U^{\nu_n}(x)$ and
\[
U^{\mu_w}(x)+\frac{1}{n}\log|F_n(x)| \le c_0\,\frac{\log n}{n},
\qquad x\in\C,
\]
where $c_0>0$ depends only on $w$, by Theorem 1 of \cite{GSa}.
Hence
\begin{align} \label{4.3}
|F_n(x)| \le n^{c_0}\, e^{-n\, U^{\mu_{w}}(x)}, \qquad x\in\C.
\end{align}
For a small $r>0$, write
\[
F_n'(x) = \frac{1}{2\pi i} \int_{|x-z|=r}
\frac{F_n(z)\,dz}{(z-x)^2}, \qquad x \in S_w^*,
\]
and estimate
\[
w^{n-1}(x)|F_n'(x)| \le \frac{w^{n-1}(x)}{r} \max_{|x-z|=r}
|F_n(z)| = O(n^{c_0})\, \frac{w^n(x)}{r} \max_{|x-z|=r} e^{-n\,
U^{\mu_{w}}(z)},
\]
as $n\to \infty$, by \eqref{4.3}. Note that $U^{\mu_{w}}(x)$ is
H\"older continuous in $\C$, because it is a harmonic function in
$\C\setminus S_w$, with smooth boundary values $\log w(x) + F_w$
on $S_w$ (see Theorem I.4.7 of \cite{ST} and Lemma 2 of
\cite{GSa}). If $\lambda\in(0,1]$ is the H\"older exponent for
$U^{\mu_{w}}(x)$, then we choose $r=n^{-1/\lambda}$ and obtain
\[
\max_{|x-z|=n^{-1/\lambda}} e^{-n\, U^{\mu_{w}}(z)} = O(1)\,
e^{-n\, U^{\mu_{w}}(x)}, \qquad x\in S_w^*.
\]
Hence
\begin{align} \label{4.4}
w^{n-1}(x)|F_n'(x)| &= O(n^{c_0+1/\lambda})\, e^{n(\log w(x) -
U^{\mu_{w}}(x))} \\ \nonumber &= O(n^{c_0+1/\lambda})\,
e^{-n\,F_w}, \qquad x\in S_w^*,
\end{align}
by \eqref{4.1} \and \eqref{2.4}. Recall that the weighted Fekete
points are contained in the compact set $S_w^* \subset
[a,b]\setminus Z$ (cf. Theorem III.1.2 of \cite{ST}), where $\log
w(x)$ is continuous. Therefore, we have from Theorem 3 of
\cite{GSa} that
\[
\int\log w\, d\nu_n - \int\log w\, d\mu_w = O\left(\frac{\log^2
n}{n}\right) \qquad \mbox{as } n\to\infty.
\]
This implies
\begin{align} \label{4.5}
\prod_{i=1}^n w(\zeta_i^{(n)}) = O(e^{\log^2 n})\, e^{n\int\log
w\, d\mu_w} \qquad \mbox{as } n\to\infty.
\end{align}
Observe that
\[
(V^w_n(\zeta_1^{(n)},\ldots,\zeta_n^{(n)}))^2 = \prod_{i=1}^n
w^{2(n-1)}(\zeta_i^{(n)}) \prod_{i=1}^n F_n'(\zeta_i^{(n)}).
\]
We now use \eqref{4.4} and \eqref{4.5} to estimate
\begin{align*}
\max_{[a,b]^n} (V_n^w)^2 &= \prod_{i=1}^n w^{n-1}(\zeta_i^{(n)})
\prod_{i=1}^n w^{n-1}(\zeta_i^{(n)}) |F_n'(\zeta_i^{(n)})| \\ &=
O(d^{n\,\log^2 n})\, e^{n(n-1)(\int\log w\, d\mu_w - F_w)} \\
&= O(d^{n\,\log^2 n})\, e^{-n(n-1)V_w} = O(d^{n\,\log^2 n})\,
\left(\textup{cap}([a,b],w)\right)^{n(n-1)},
\end{align*}
where $d>e$, as $n\to\infty.$ Thus the upper bound in \eqref{2.31}
is proved. The lower bound of \eqref{2.31} is a well known
consequence of extremal properties for the weighted Fekete points
and Vandermonde determinants, see Theorem III.1.1 of \cite{ST},
which states that the sequence
$|V^w_n(\zeta_1^{(n)},\ldots,\zeta_n^{(n)})|^{\frac{2}{n(n-1)}}$
decreases to $\textup{cap}([a,b],w)$ as $n\to\infty.$

\end{proof}

{\bf Acknowledgement.} The author thanks Amit Ghosh for several
helpful conversations about this paper, and the referee for
suggested improvements.

\end{document}